\theoremstyle{thmstyleone}
\newtheorem{theorem}{Theorem}[section]
\newtheorem{proposition}[theorem]{Proposition}
\newtheorem{conclusion}{Conclusion}
\theoremstyle{thmstyletwo}
\newtheorem{example}[theorem]{Example}
\theoremstyle{thmstylethree}
\newtheorem{definition}{Definition}[section]
\def\r{\mathbb R}
\def\E{\mathbb E}
\def\S{\mathbb S}
\def\t{\mathbf t}
\def\n{\mathbf n}
\def\b{\mathbf b}
\def\x{\mathbf x}
\def\u{\mathbf u}
\def\v{\mathbf v}
\journal{}
\titleformat{\section}{\Large\bfseries}{\thesection}{1em}{}
\titleformat{\subsection}{\large\bfseries}{\thesubsection}
{1em}{}
\begin{document}
%%%%%%%%%%%%%%%%%%%%%%%%%%%%%%%%%%%%%%%%%%%%%%%%%%%%%%%%%%%%
\title{\Huge On non-Newtonian Helices in Multiplicative Euclidean Space $\E_*^3$}
%%%%%%%%%%%%%%%%%%%%%%%%%%%%%%%%%%%%%%%%%%%%%%%%%%%%%%%%%%%%
\author{\large
\orcidlink{0000-0000-0000-0000} Aykut HAS$,^{1*}$ \vspace*{0.2cm}
\orcidlink{0000-0000-0000-0000}Beyhan YILMAZ,$^{2}$  \vspace*{0.2cm} \\
$^{1,2}$Department of Mathematics, Faculty of Science, Kahramanmaras Sutcu Imam University, Kahramanmaras, Turkey
\footnote{
\textbf{Corresponding Author:} Aykut HAS \textit{e-mail:} \url{ahas@ksu.edu.tr}\\
\textbf{Contributing Authors:}Beyhan YILMAZ \textit{e-mail:}\url{beyhanyilmaz@ksu.edu.tr};  Muhittin Evren AYDIN \textit{e-mail:}\url{meaydin@firat.edu.tr} } }
\begin{frontmatter}
%%%%%%%%%%%%%%%%%%%%%%%%%%%%%%%%%%%%%%%%%%%%%%%%%%%%%%%%%%%%
\begin{abstract}
%%%%%%%%%%%%%%%%%%%%%%%%%%%%%%%%%%%%%%%%%%%%%%%%%%%%%%%%%%%%
In this article, spherical indicatrices of a curve and helices are re-examined using both the algebraic structure and the geometric structure of non-Newtonian (multiplicative) Euclidean space. Indicatrices of a multiplicative curve on the multiplicative sphere in multiplicative space are obtained. In addition, multiplicative general helix, multiplicative slant helix and multiplicative clad and multiplicative g-clad helix characterizations are provided. Finally, examples and drawings are given.
\end{abstract}

\begin{keyword}
 Non-Newtonian calculus \sep spherical indicatrices \sep helices \sep multiplicative differential geometry.
\MSC  53A04 \sep 11U10 \sep 08A05.
\end{keyword}
\end{frontmatter}

%%%%%%%%%%%%%%%%%%%%%%%
\section{Introduction}
%%%%%%%%%%%%%%%%%%%%%%%
Classical analysis, which is a widely used mathematical theory today, was defined by Gottfried Leibniz and Isaac Newton in the second half of the 17th century based on the concepts of derivatives and integrals. Constructed upon algebra, trigonometry, and analytic geometry, the classical analysis consists of concepts such as limits, derivatives, integrals, and series. These concepts are regarded as simple versions of addition and subtraction, leading to the designation of this analysis as summational analysis. Classical analysis finds applications in various fields, including natural sciences, computer science, statistics, engineering, economics, business, and medicine, where mathematical modeling is required, and optimal solution methods are sought. However, there are situations in some mathematical models where classical analysis falls short. Therefore, alternative analyses have been defined based on different arithmetic operations while building upon classical analysis. For instance, in 1887, Volterra V. developed an approach known as Volterra-type analysis or multiplicative analysis since it is founded on the multiplication operation \cite{volterra}. In multiplicative analysis, the roles of addition and subtraction operations in classical analysis are assumed by the multiplication and division operations, respectively. Following the definition of Volterra analysis, Grossman M. and Katz R. conducted some new studies between 1972 and 1983. This led to the development of the non-Newtonian analysis, which also involves fundamental definitions and concepts \cite{grossman,grossman2}. These analyses have been referred to as geometric analysis, bigeometric analysis, and anageometric analysis. Multiplicative analysis has emerged as an alternative approach to classical analysis and has become a significant area of research and development in the field of mathematics. These new analyses may allow for a more effective resolution of various problems by examining different mathematical structures. Furthermore, these studies contribute to the expansion of the boundaries of mathematical analysis and find applications in various disciplines.

Arithmetic is an integer field which is a subset of the real numbers. An arithmetic system is the structure obtained by algebraic operations defined in this field. In fact, this field can be considered as a different interpretation of the real number field such that a countable number of infinitely ordered objects can be formed and these structures are equivalent or isomorphic to each other. The generator function, which is used to create arithmetic systems, is a one-to-one and bijective transformation whose domain is real numbers and whose value set is a subset of positive real numbers. The unit function $I$ and the function $e^x$ are examples of generator functions. Just as each generator produces a single arithmetic, each arithmetic can be produced with the help of a single generator. Multiplicative analysis has its own multiplicative space. In this special space, the classical number system has turned into a multiplicative number system consisting of positive real numbers, denoted by $\r_*$. Likewise, the basic mathematical operations in classical analysis have also turned into their purely multiplicative versions. This is clearly shown in the table below.
\begin{equation}
\begin{tabular}{|l|l|l|}
	\hline
	$a+_*b$  & $e^{\log a+\log b}$ & $ab$ \\
	\hline
	$a-_*b$ & $e^{\log a-\log b}$ & $\frac{a}{b}$ \\
	\hline
	$a\cdot_*b$ & $e^{\log a\log b}$ & $a^{\log b}$ \\
	\hline
	$a/_*b$ & $e^{\log a/\log b}$ & $a^{\frac{1}{\log b}},~ b\neq 1$ \\
	\hline
\end{tabular} \label{m}
\end{equation}
\centerline{\textbf{Table 1.} Basic multiplicative operations.}
Multiplicative analysis, contrast to not a completely new topic, has recently started to be explored and discovered more in today's context. The main reason behind this lies in the successful modeling of problems that cannot be addressed using classical analysis, achieved through the application of multiplicative analysis. This characteristic has led many mathematicians to prefer multiplicative analysis for solving challenging problems that are otherwise difficult to model within their respective fields. Stanley D. took the lead in this regard and re-announced geometric analysis as multiplicative analysis \cite{stanley}. On this subject, fractal growths of fatigue defects in materials are studied by Rybaczuk M. and Stoppel P. \cite{rybaczuk} and the physical and fractional dimension concepts are studied by Rybaczuk M. and Zielinski W. \cite{rybaczuk2}. In addition, there are many studies on multiplicative analysis in the field of pure mathematics. For example, the non-Newtonian efforts in complex analysis are \cite{ali,bashirov2}, in numerical analysis \cite{yazici,boruah,aniszewska}, in differential equations  \cite{bashirov3,yalcin,waseem}. Also, Bashirov et al. reconsider multiplicative analysis with some basic definitions, theorems, propositions, properties and examples \cite{bashirov}. The multiplicative Dirac system and multiplicative time scale are studied by Y. Emrah et al. \citep{emrah1,emrah2,emrah3}.

Georgiev S. brought a completely different perspective to multiplicative analysis with the books titled {\it "Multiplicative Differential Calculus"}, {\it "Multiplicative Differential Geometry"} and {\it "Multiplicative Analytic Geometry"} published in 2022 \cite{svetlin,svetlin2,svetlin3}. Unlike previous studies, Georgiev S. used operations as purely multiplicative operations and almost reconstructed the multiplicative space. These books have been recorded as the initial studies in particular for multiplicative geometry. Georgiev S.'s book \cite{svetlin2} serves as a guide for researchers in this field by encompassing numerous fundamental definitions and theorems pertaining to curves, surfaces, and manifolds. The book elucidates how to associate basic geometric objects such as curves, surfaces, and manifolds with multiplicative analysis, shedding light on their properties in multiplicative spaces. Additionally, it emphasizes the connections between multiplicative geometry and other mathematical domains, making it a valuable resource for researchers working in various branches of mathematics. Afterward, Nurkan S.K. et al. tried to construct geometry with geometric calculus. In addition, Gram-Schmidt vectors are obtained \cite{karacan}. On the other hand, Aydın M.E. et al. studied rectifying curves in multiplicative Euclidean space. The multiplicative rectifying curves are fully classified and visualized through multiplicative spherical curves \cite{evren}.

A helix curve is the curve that a point follows as it rotates around a fixed axis in a three-dimensional space. The helix curve is formed as a result of this rotational movement, and the rotation time around the axis determines the stability of the curve. While the helix curve is important in terms of geometry, it is also increasing in different branches of science. For example, helix is a term used for the connections of DNA. The double helix structure of DNA is called an image helix \cite{bio}. In computer graphics and 3D applications, helix curves are used in sections of complex surfaces and their results \cite{bil}. The helix is used in blades and aerospace engineering for the design and performance analysis of propellers and rotor blades \cite{hav}. In addition, helices have been traditionally studied by many researchers with their different properties \cite{izumuya,takahashi,aykut,mahmut,kaya}.

 In this study, spherical indicatrices and helix curves, which are important for differential geometry, are examined in multiplicative space. Spherical indicatrices, general helix, slant helix, clad helix and g-clad helix are rearranged with reference to multiplicative operations. Moreover, in the multiplicative Euclidean space, basic concepts such as orthogonal vectors, orthogonal system, curves, Frenet frame, etc. are mentioned. In addition, it is aimed to make these basic concepts more memorable by visualizing them.

%%%%%%%%%%%%%%%%%%%%%%%
\section{Multiplicative Calculus and Multiplicative Space} \label{sec2}
%%%%%%%%%%%%%%%%%%%%%%%
In this section, basic definitions and concepts regarding multiplicative analysis are presented within the framework of the information provided by Georgiev S. \cite{svetlin,svetlin2,svetlin3}. As mentioned in the Introduction, multiplicative analysis has its own distinct space. Let 
\begin{equation*}
\r_*=\{exp(a):a\in\r\}=\r^+.  
\end{equation*}
Given by equation \eqref{m}, a multiplicative structure is formed by the field $(\r_*, +_*,\cdot_* )$. Each element of the space $\r_*$ is referred to as a multiplicative number and is denoted by $a_*\in\r_*$, where $a_* = exp(a)$. In addition, the unit elements of multiplicative addition and multiplicative multiplication operations are $0_*=1$ and $1_*=e$, respectively. Also, we say that a number $a\in\r_*$ is the multiplicative positive number if $a > 1$ or $a>0_*$. A number $a\in\r_*$ is said to be multiplicative negative if it is not equal to $0_*$ and $a<0_*$ or $0<a<1$.
Inverse elements of multiplicative addition and multiplicative multiplication are, respectively,
$$
-_* a=1/a,\quad a^{-1_*}=e^{\frac{1}{\log a}}.
$$
The multiplicative absolute value function in multiplicative space is given as follows in line with the information above 
\begin{equation*}
 \left \vert a \right \vert_*=\left\{ 
\begin{array}{ll}
a, & a>0_* \\ 
0_*, & a=0_*\\
-_*a,& a<0_*,
\end{array}%
\right.
\end{equation*}
where it can be used as $1/a$ instead of $-_*a$ and $a<0_*$ in stead of $a\in(0,1)$.
Also, some important features of the power function on multiplicative space are as follows for all $a\in\r_*$ and $k\in\mathbb{N}$
$$
a^{k_*}=e^{(\log a)^k}, \quad a^{\frac{1}{2}_*}=\sqrt[*]{a}=e^{\sqrt{\log a}}.
$$

Each ordered coordinate vector $(u_1,u_2,...,u_n)$ of the space $\r_*^n$ determines a point $P$. This point $P$ is called the $n$-dimensional multiplicative vector. The multiplicative addition and multiplicative scalar multiplication are in the multiplicative vector space $\r_*^n$ for each $\u,\v\in\r_*^n$ and $k\in\r_*$, as follows
\begin{eqnarray*}
    \u+_*\v&=&(u_1+_*v_1,...,u_n+_*v_n)=(u_1 v_1,..,u_n v_n),\\
    k\cdot_*\u&=&(k\cdot_*u_1,...,k\cdot_*u_n)=(u_1^{\log k},...,u_n^{\log k})=e^{\log k\log \u}.
\end{eqnarray*}
where $\log\u=(\log u_1,...,\log u_n).$

Let $\u$ and $\v$ be any two multiplicative vectors in the multiplicative vector space $\r_*^n$. The multiplicative inner product of the $\u$ and $\v$ vectors is
\begin{equation*}
 \langle \u,\v\rangle_*=e^{\langle \log\u,\log\v\rangle}. 
\end{equation*}
where $\langle,\rangle$ is the Euclidean inner product. Moreover, if the multiplicative vectors $\u$ and $\v$ are orthogonal to each other in the multiplicative sense a relation can be given as
\begin{equation*}
 \langle \u,\v\rangle_*=0_*. 
\end{equation*}
In Fig. \ref{fig5}, we present the graph of the multiplicative orthogonal vectors.
\begin{figure}[hbtp]
\begin{center}
\includegraphics[width=.4\textwidth]{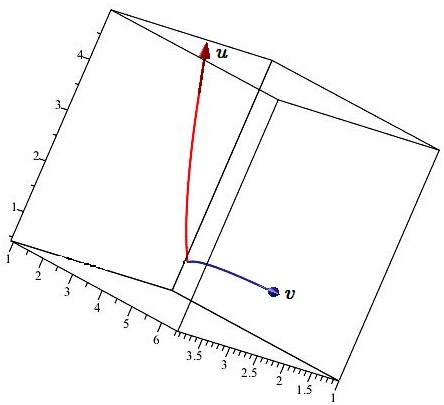}\\
\end{center}
\caption{\text{Multiplicative orthogonal vectors $\u=(e^{\frac{1}{2}},e^{-\frac{3}{4}},e^{\frac{3}{2}})$ and $\v=(e^{\frac{3}{4}},e,e^{\frac{1}{4}})$.} \label{fig5}}
\end{figure}

For a vector $\u\in\r_*^n$, the multiplicative norm of $\u$ is defined as follows,
\begin{equation*}
    \|\u\|_*=e^{\sqrt{\langle \log\u,\log\u\rangle}}.
\end{equation*}
 The  multiplicative cross product of $\u$ and $\v$ in $\r_*^3$ is defined by
\begin{equation*}
 \u \times_* \v = (e^{\log u_2\log v_3 -\log u_3\log v_2},e^{\log u_3\log v_1 -\log u_1\log v_3}, e^{\log u_1\log v_2 -\log u_2\log v_1}) .
\end{equation*}
It is direct to prove that the multiplicative cross product holds the standard algebraic and geometric properties. For example, $\u \times_* \v$ is multiplicative orthogonal to $\u$ and $\v$. In addition, $\u \times_* \v = \mathbf{0}_*$ if and only if $\u$ and $\v$ are multiplicative collinear. In Fig. \ref{fig4}, we present the graph of the multiplicative orthogonal system.
\begin{figure}[hbtp]
\begin{center}
\includegraphics[width=.4\textwidth]{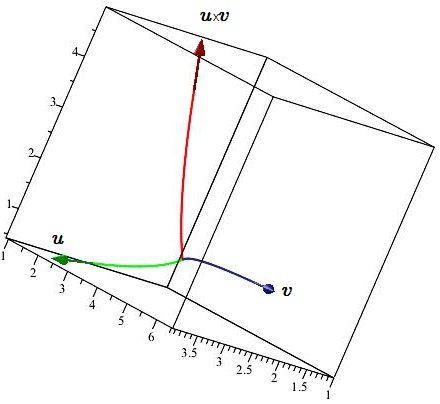}\\
\end{center}
\caption{\text{Multiplicative orthogonal system $\u=(e^{\frac{1}{2}},e^{-\frac{3}{4}},e^{\frac{3}{2}})$, $\v=(e^{\frac{3}{4}},e,e^{\frac{1}{4}})$ and $\u\times_*\v=(e^{-\frac{27}{16}},e,e^{\frac{17}{16}})$.} \label{fig4}}
\end{figure}
\newpage
Let $\u$ and $\v$ represent two unit multiplicative directions in $\r_*^3$. Denote by $\theta$ the multiplicative angle between the multiplicative unit vectors $\u$ and $\v$. Hence
\begin{equation*}
    \theta=\arccos_*(e^{\langle \log\u,\log\v\rangle}).
\end{equation*}
Trigonometric functions in terms of multiplicative space, are
$$
\sin_* \theta=e^{\sin\log \theta}, \quad \cos_* \theta=e^{\cos\log \theta}.
$$
Similarly, trigonometric functions $\tan_* \theta$ and $\cot_* \theta$ are also available at hand. In addition, multiplicative trigonometric functions provide multiplicative trigonometric relations in parallel with classical trigonometric relations. For example, there is the idendity $\sin_*^{2*}\theta+_*\cos_*^{2*}\theta=1_*$. For other relations, see \cite{svetlin}.

Let $x\in I\subset\r_*$ and $f(x) \subset \r_*$ be a multiplicative function. The first multiplicative derivative of $f$ at $x\in I$ is defined as follows, which will be denoted by $f^*(x)$,  
$$
f^*(x)=\lim_{h\rightarrow 0_*}(f(x+_*h)-_*f(x))/_*h.  
$$
In terms of the usual arithmetical operations,
\begin{eqnarray*}
f^*(x)&=&\lim_{h\rightarrow 1}\left(\frac{f(xh)}{f(x)}\right)^{1/\log h}\\
&=&\lim_{h\rightarrow 1}e^{\frac{\log\frac{f(xh)}{f(x)}}{\log h}}.
\end{eqnarray*}
If the L' Hospital's rule applies here, we get
$$
f^*(x)=e^{\frac{xf'(x)}{f(x)}}.
$$
In addition, if the function $f$ is differentiable in the multiplicative sense and continuous, it is called * (multiplicative) differentiable function.

It can be said that the multiplicative derivative provides some properties such as linearity, Leibniz rule and chain rules as in the classical derivative as follows

\begin{enumerate}
	\item  $(a\cdot_*f)^{*}(x)=a\cdot_*f^{*}(x),~ \text{for all}~ a\in\mathbb{R_*}$,
	
	\item $(f(x)\mp_*g(x))^*=f^{*}(x)\mp_*g^{*}(x)$,
	
	\item $(f(x)\cdot_*g(x))^*=f^{*}(x)\cdot_*g(x)+_*g^{*}(x)\cdot_*f(x)$,

    \item $(f(x)/_*g(x))^*=(f^{*}(x)\cdot_*g(x)-_*g^{*}(x)\cdot_*f(x))/_*g^{2_*}(x) $,
	
	\item $(f\circ g)^*(x)=f^{*}(g(x))\cdot_*g^*(x)$,
	
	\item $f^{*^{(k)}}(x)=(f^{*^{(k-1)}}(x))^*, k\in\mathbb{N}, $ 
\end{enumerate}
where $f,g$ are $*$ (multiplicative)-differentiable for each $x\in\r_*$. 

The definition of the multiplicative integral is given as the inverse operator of the multiplicative derivative. The  multiplicative indefinite integral of the function $f(x)$ is defined by 
\begin{equation*}
 \int_* f(x)\cdot_*d_*x=e^{\int\frac{1}{x}\log f(x)dx},\quad x\in\r_*.
\end{equation*}

The geometric location of points with equal multiplicative distances from a point in multiplicative space is called a multiplicative sphere. The equation of the sphere with centered at $C(a,b,c)$ and radius $r$ is
\begin{equation*}
  \|P-_*C\|_*=r,  
\end{equation*}
where $P=(x,y,z)$ is the representation point of the multiplicative sphere, so
\begin{equation*}
e^{ (\log x - \log a)^2+(\log y - \log b)^2+(\log z - \log c)^2}=e^{(\log r)^2}. 
\end{equation*}
In Fig. \ref{figz} we show the multiplicative sphere with centered at multiplicative origin $O(0_*,0_*,0_*)$ and radius $1_*$.
\begin{figure}[hbtp]
\begin{center}
\includegraphics[width=.3\textwidth]{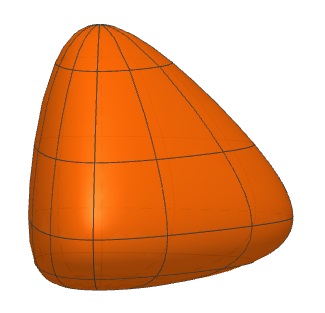}\\
\end{center}
\caption{\text{A multiplicative sphere with centered at multiplicative origin $O(0_*,0_*,0_*)$ and radius $1_*$.} \label{figz}}
\end{figure}
%%%%%%%%%%%%%%%%%%%%%%%
\section{Differential Geometry of Curves in Multiplicative Space} \label{sec2}
%%%%%%%%%%%%%%%%%%%%%%%
A multiplicative parametrization of class $C_*^k$ ($k \geq 1_*$) for a curve $\x$ in $\mathbb{R}_*^3$ (i.e., the component-functions of $\x$ are 
$k$-times continuously multiplicative differentiable), is a multiplicative vector valued function $\x: I \subset \mathbb{R}_* \to \mathbb{E}_*^3$, where $s$ is mapped to $\x(s) = (x_1(s), x_2(s), x_3(s))$. In particular, a parametric multiplicative curve $\x$  is regular if and only if $\| \x^*(s) \|_* \neq 0_*$ for any $s \in I$. Looking at it dynamically, the multiplicative vector $\x^*(s)$ represents the multiplicative velocity of the multiplicative curve at time $s$. For a multiplicative curve $\x$ to have multiplicative naturally parameters, the necessary and sufficient condition is that the curve is from the class $C_*^k$ and $\|\x^*(s)\|_*= 1_*$ for each $s \in I$.

Given $s_0\in I$, the multiplicative arc length of a multiplicative regular parametrized curve $\x(s)$ from the point $s_0$, is by definition 
\begin{equation}
 h(s)=\int_{*s_0}^{s}\|\x^*(t)\|_*\cdot_*d_*t. \label{a}  
\end{equation}
As an example, the multiplicative circle curve in multiplicative plane with center $(0_*,0_*,0_*)$ and radius $r=e^{-2}$ is given by the equation $\x(s)=e^{-2}\cdot_*(e^{\frac{1}{2}}\cos_*2s,e^{\frac{1}{2}}\cdot_*\sin_*2s,e^{\sqrt{3}})$ in $\r_*^3$. It can be plotted as in Fig \ref{fig1}.
\begin{figure}[hbtp]
\begin{center}
\includegraphics[width=.3\textwidth]{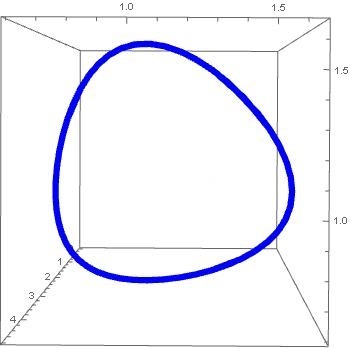}
\end{center}
\caption{A multiplicative circle in the plane $z=e^{\frac{\sqrt{3}}{2}}$ with centered at $(0_*,0_*,0_*)$, radius $r=1/e^2$ and $0_*<s<e^{2\pi}$.} \label{fig1}
\end{figure}

The multiplicative Frenet trihedron of a naturally parameterized multiplicative curve $\x(s)$ are
$$\t(s)=\x^*(s),\quad \quad  \n (s)= \x^{**}(s)/_*\| \x^{**}(s)\|_*,\quad  \quad\b (s)= \t (s) \times_* \n (s).$$
\quad The vector field $\t(s)$ (resp. $\n(s)$ and $\b(s)$) along $\x(s)$ is said to be {\it multiplicative tangent} (resp. {\it multiplicative principal normal} and {\it multiplicative binormal}). It is direct to prove that $\{\t (s), \n (s), \b (s) \}$ is mutually multiplicative orthogonal and $\n (s) \times_* \b (s) =\t (s) $ and $\b (s) \times_* \t (s) =\n (s) $. We also point out that the arc length parameter and multiplicative Frenet frame are independent from the choice of multiplicative parametrization \cite{svetlin2}. 

To give an example, the multiplicative Frenet vectors of the multiplicative curve $$\x(s)=\left((e^3/_*e^5)\cdot_*\cos_* s,(e^3/_*e^5)\cdot_*\sin_* s,e^{4}/_*e^{5}\cdot_*e^{ s}\right)$$ are
\begin{eqnarray*}
    \t(s)&=&(-_*(e^3/_*e^5)\cdot_*\sin_* s,(e^3/_*e^5)\cdot_*\cos_* s,e^4/_*e^5)\\
    \n(s)&=&(-_*\cos_* s,-_*\sin_* s,0_*)\\
    \b(s)&=&((e^4/_*e^5)\cdot_*\sin_*s,-_*(e^4/_*e^5)\cdot_*\cos_*s,e^3/_*e^5)
\end{eqnarray*}
In Fig. \ref{figx}, we present the graph of the multiplicative Frenet frame on the multiplicative curve $\x(s)$.
\begin{figure}[hbtp]
\begin{center}
\includegraphics[width=.33\textwidth]{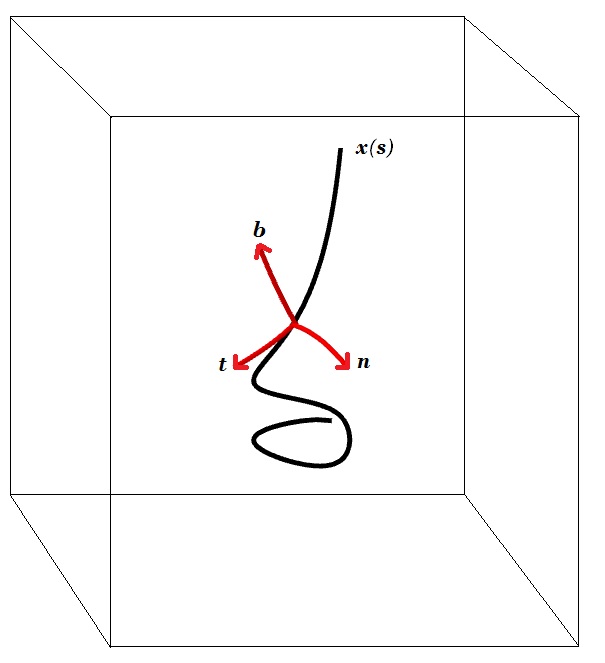}\\
\end{center}
\caption{\text{Multiplicative curve and its multiplicative Frenet frame.} \label{figx}}
\end{figure}

The multiplicative Frenet formulae of $\x$ are given by
\begin{eqnarray*}
    \t^*&=&\kappa \cdot _* \n,\\
    \n^*&=&-_*\kappa \cdot_* \t +_* \tau \cdot_* \b,\\
    \b^*&=&-_* \tau \cdot_* \n,
\end{eqnarray*}
where $\kappa=\kappa(s)$ and $\tau=\tau(s)$ are the curvature and the torsion functions of $\x$, calculated by
\begin{eqnarray}
    \kappa(s)&=&\| \x^{**}(s)\|_*=e^{\langle \log\x^{**},\log\x^{**}\rangle^{\frac{1}{2}}},\\
    \tau(s)&=&\langle \n^*(s),\b(s) \rangle_* =e^{\langle \log\n^*(s),\log\b(s)\rangle} \label{cur}.
\end{eqnarray}

%%%%%%%%%%%%%%%%%%%%%%%
\section{Main Results} \label{sec2}
%%%%%%%%%%%%%%%%%%%%%%%

%%%%%%%%%%%%%%%%%%%%%%%
\subsection{Multiplicative Spherical Indicatries} \label{sec2}
%%%%%%%%%%%%%%%%%%%%%%%
Consider a multiplicative curve $\x(s)\in\r_*^{3}$. The multiplicative Frenet vectors of $\x$ also evolve along the curve as a multiplicative vector field. The thing to note here is that since the multiplicative Frenet vectors of the multiplicative curve $\x$ are multiplicative unit vectors, they form a curve on the multiplicative sphere. In this section, such curves will be examined.

 The multiplicative curve $\x(s)$ is associated with multiplicative Frenet vectors $\{\t,\n,\b\}$. Now, let us consider the unit multiplicative tangent vectors along $\x(s)$. These vectors collectively form another curve, denoted by $\x_{t}=\t$. This new curve resides on the surface of a multiplicative sphere with a radius of $1_*$ and centered at the multiplicative origin $O=(0_*,0_*,0_*)$. The multiplicative curve $\x_{t}$ is often referred to as the multiplicative spherical indicatrix associated with the unit multiplicative tangent vector $\t$. We will call this curve {\it multiplicative tangent indicatrix} of the original multiplicative curve $\x$, in line with the more conventional notation. With similar thought, we will call multiplicative curves $\x_n=\n$ and $\x_b=\b$ as {\it multiplicative normal indicatrix} and {\it multiplicative binormal indicatrix} of $\x$, respectively.
\begin{proposition}
Let $\x_t$ be the multiplicative tangent indicatrix of a multiplicative naturally parameterized curve $\x$. The multiplicative naturally parameter $s_t$ of $\x_t$ is given by
\begin{equation*}
  s_t=e^{\int^s\frac{1}{u}\kappa(u)du},
\end{equation*}
where $s$ is  multiplicative naturally parameter of $\x$ and $\kappa(s)$ is multiplicative curvature of $\x$
\end{proposition}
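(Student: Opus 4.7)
The plan is to apply the multiplicative arc length formula \eqref{a} directly to the curve $\x_{t}(s):=\t(s)$ and then unwind the resulting multiplicative integral via the definition $\int_{*} f(u)\cdot_{*}d_{*}u=e^{\int\frac{1}{u}\log f(u)\,du}$ given earlier.

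First I would multiplicatively differentiate: since $\x_{t}=\t$, the first multiplicative Frenet equation yields $\x_{t}^{*}(s)=\t^{*}(s)=\kappa(s)\cdot_{*}\n(s)$. Next I would compute the multiplicative speed $\|\x_{t}^{*}(s)\|_{*}$. A short componentwise check using $k\cdot_{*}\v=(e^{\log k\log v_{i}})_{i}$ confirms that multiplicative scalar multiplication pulls out of the multiplicative norm, i.e.\ $\|k\cdot_{*}\v\|_{*}=|k|_{*}\cdot_{*}\|\v\|_{*}$. Since $\n$ is multiplicatively unit and $\kappa>0_{*}$ by convention, this reduces to $\|\x_{t}^{*}(s)\|_{*}=\kappa(s)$.

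Inserting this into \eqref{a} gives
$$s_{t}=\int_{*\,s_{0}}^{\,s}\kappa(u)\cdot_{*}d_{*}u,$$
and converting this multiplicative integral to an ordinary one via the definition recalled above produces the stated closed-form expression for $s_{t}$.

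The main (and essentially only) obstacle is the norm identity $\|k\cdot_{*}\v\|_{*}=|k|_{*}\cdot_{*}\|\v\|_{*}$: one must be careful not to borrow ordinary scalar-multiplication reasoning wholesale, and instead verify it from the explicit formula $\|\u\|_{*}=e^{\sqrt{\langle\log\u,\log\u\rangle}}$. Once this small lemma is in hand, the proposition is an immediate assembly of the first Frenet equation, the arc length definition \eqref{a}, and the conversion rule between multiplicative and ordinary integrals.
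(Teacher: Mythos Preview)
Your proposal is correct and follows essentially the same strategy as the paper: apply the arc length formula \eqref{a} to $\x_t=\t$, identify $\|\t^*\|_*=\kappa$, and convert the resulting multiplicative integral. The only minor variation is that the paper reads off $\|\t^*\|_*=\kappa$ directly from the definition $\kappa=\|\x^{**}\|_*$ (since $\t^*=\x^{**}$), whereas you reach the same conclusion via the Frenet relation $\t^*=\kappa\cdot_*\n$ together with the norm identity $\|k\cdot_*\v\|_*=|k|_*\cdot_*\|\v\|_*$; your route is slightly longer but equally valid.
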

\begin{proof}
 Let $\x(s)$ be the multiplicative naturally parameterized curve. Also, let $\x_t(s)=\t(s)$ be the multiplicative tangent indicatrix of $\x(s)$. Considering Eq. \eqref{a}, we get the multiplicative naturally parameter of $\x_t$ as follows                   
\begin{equation*}
 s_t=\int_{*}^s\|\t^*(u)\|_*\cdot_*d_*u.  
\end{equation*} 
Then from the definition of multiplicative curvature, we get
\begin{equation*}
 s_t=\int_{*}^s\kappa(u)\cdot_*d_*u  
\end{equation*} 
or equivalently
\begin{equation*}
  s_t=e^{\int^s\frac{1}{u}\kappa(u)du}.
\end{equation*} 
\end{proof}
\begin{theorem}
Let $\x_t$ be the multiplicative tangent indicatrix of a multiplicative naturally parameterized curve $\x$ with $\kappa\neq 0_*$ on $I$. The multiplicative Frenet vectors $\{T_t,N_t,B_t\}$ of $\x_t$ satisfy
\begin{eqnarray*}
 T_t&=&\n, \\
 N_t&=&(-_*\t+_*f\cdot_*\b)/_*e^{(1+(\log f(s))^2)^\frac{1}{2}},\\
 B_t&=&(f\cdot_*\t+_*\b)/_*e^{(1+(\log f(s))^2)^\frac{1}{2}},
\end{eqnarray*}
where $f=f(s)$ and $f=\tau/_*\kappa$.
\end{theorem}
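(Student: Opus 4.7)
The plan is to imitate the classical computation of the Frenet frame of the tangent indicatrix, but keeping every operation strictly in the multiplicative calculus laid out in Section 2. The key engine is the multiplicative chain rule together with the multiplicative Frenet formulae for $\x$ and the identity $d_*s_t/d_*s = \kappa$ from the previous proposition.

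First I would compute $T_t$. Writing $\x_t(s) = \t(s)$ and applying the multiplicative chain rule to the reparametrization $s\mapsto s_t$, I get $\t^*(s) = T_t(s_t)\cdot_* (s_t)^*(s)$. Since $\t^* = \kappa\cdot_*\n$ (Frenet) and $(s_t)^* = \kappa$ (from the proposition), dividing multiplicatively by $\kappa$ yields $T_t = \n$, which handles the first formula.

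Next I would differentiate $T_t$ multiplicatively with respect to $s_t$, again via chain rule: $T_t^{\,*}(s_t) = \n^*(s)/_*\kappa$. Using the second Frenet relation $\n^* = -_*\kappa\cdot_*\t +_* \tau\cdot_*\b$ and distributing the multiplicative division, this simplifies to $-_*\t +_* f\cdot_*\b$, where $f = \tau/_*\kappa$. To obtain $N_t$, I must normalize this vector. Here comes the computational step that requires the most care: the multiplicative norm is $\|-_*\t +_* f\cdot_*\b\|_* = e^{\sqrt{\langle \log(-_*\t+_*f\cdot_*\b),\,\log(-_*\t+_*f\cdot_*\b)\rangle}}$. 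Because $\{\t,\n,\b\}$ is multiplicatively orthonormal, $\{\log\t,\log\n,\log\b\}$ is orthonormal in the classical Euclidean sense, so the logarithm of $-_*\t+_*f\cdot_*\b$ decomposes into orthogonal components of squared-length $1$ and $(\log f)^2$. The norm therefore collapses to $e^{(1+(\log f(s))^2)^{1/2}}$, giving the stated expression for $N_t$.

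Finally, I would obtain $B_t$ from the definition $B_t = T_t \times_* N_t = \n\times_*N_t$. Using bilinearity of $\times_*$ together with the identities $\n\times_*\t = -_*\b$ and $\n\times_*\b = \t$ (which follow from $\t\times_*\n = \b$ and the cyclic permutation property of the multiplicative cross product recorded in Section 2), the denominator $e^{(1+(\log f)^2)^{1/2}}$ is preserved and the numerator becomes $f\cdot_*\t +_*\b$, matching the claimed formula. The only genuine obstacle is the norm computation, because every other step is an application of previously established chain rules and Frenet formulae transcribed into multiplicative form; once one realizes that the multiplicative inner product reduces to a Euclidean inner product of logarithms, the orthonormality of $\{\t,\b\}$ makes the square root clean and the remainder of the proof is algebraic bookkeeping.
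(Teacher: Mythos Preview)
Your proposal is correct and follows essentially the same route as the paper: chain rule with $d_*s_t/d_*s=\kappa$ to get $T_t=\n$, a second chain-rule differentiation plus the Frenet formula $\n^*=-_*\kappa\cdot_*\t+_*\tau\cdot_*\b$ to obtain $-_*\t+_*f\cdot_*\b$, normalization via the Euclidean orthonormality of $\{\log\t,\log\b\}$, and finally $B_t=T_t\times_*N_t$ using the cyclic identities of the multiplicative cross product. Your remark that the norm computation is the only nontrivial step is accurate; the paper carries it out in exactly the way you describe.
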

\begin{proof}
Since $\x_t$ is the multiplicative tangent indicatrix of $\x$, we have 
$$
\x_t=\t.
$$
Taking the multiplicative derivative of both sides of the above equation with respect to $s$,
$$
(d_*\x_t/_*d_*s_t)\cdot_*(d_*s_t/_*d_*s)=\kappa\cdot_*\n.
$$
Using {\it Proposition 4.1} and putting $T_t=d_*\x_t/_*d_*s_t$, we get
\begin{equation}
  T_t=\n. \label{b} 
\end{equation}
If we take the multiplicative derivative of Eq. \eqref{b} with respect to $s$ and apply multiplicative Frenet formulas, we have
\begin{equation*}
 \kappa\cdot_*(d_*T_t/_*d_*s_t)=-_*\kappa\cdot_*\t+_*\tau\cdot_*\b
\end{equation*}
and
\begin{equation*}
d_*T_t/_*d_*s_t=-_*\t+_*(\tau/_*\kappa)\cdot_*\b.
\end{equation*}
Considering the multiplicative norm, the following equation is obtained: 
\begin{eqnarray}
\|d_*T_t/_*d_*s_t\|_*&=&e^{(\langle-\log\t,-\log\t\rangle+(\frac{\log\tau}{\log\kappa})^2\langle\log\b,\log\b\rangle)^{\frac{1}{2}}},\notag\\
 &=&e^{\sqrt{(1+(\frac{\log\tau}{\log\kappa})^2)}}.\label{d}
\end{eqnarray}
In that case, we can see that
\begin{equation*}
N_t=(d_*T_t/_*d_*s_t)/_*\|d_*T_t/_*d_*s_t\|_*=(-_*\t+_*(\tau/_*\kappa)\cdot_*\b)/_*e^{(1+(\frac{\log\tau}{\log\kappa})^2)^\frac{1}{2}}.
\end{equation*}
Setting $\tau/_*\kappa=f$,
\begin{equation}
    N_t=(-_*\t+_*f\cdot_*\b)/_*e^{(1+(\log f(s))^2)^\frac{1}{2}}.\label{c}
\end{equation}
On the other hand, if we take into account Eqs. \eqref{b} and \eqref{c} along with the multiplicative Frenet formulas, we obtain the final Frenet vector as
\begin{equation*}
 B_t=[\n\times_*(-_*\t+_*f\cdot_*\b)]/_*e^{(1+(\log f(s))^2)^\frac{1}{2}}.   
\end{equation*}
When we organize the multiplicative operations, we obtain
\begin{equation}
 B_t=(f\cdot_*\t+_*\b)/_*e^{(1+(\log f(s))^2)^\frac{1}{2}}. \label{e}
\end{equation}
\end{proof}
\begin{proposition}
Let $\x_t$ be the multiplicative tangent indicatrix of a multiplicative naturally parameterized curve $\x$ with $\kappa\neq 0_*$ on $I$. The multiplicative curvatures of $\x_t$ are
  \begin{equation*}
     \kappa_t=e^{(1+(\log f(s))^2)^\frac{1}{2}}\quad\text{and}\quad \tau_t=\sigma\cdot_*e^{(1+(\log f(s))^2)^\frac{1}{2}},
  \end{equation*}
where $\sigma=f^*/_*(e^{\log\kappa(1+(\log f(s))^2)^\frac{3}{2}}).$
\end{proposition}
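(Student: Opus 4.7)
The plan is to read both multiplicative curvatures directly from the Frenet apparatus for $\x_t$ set up in the preceding theorem, using the defining formulas for $\kappa$ and $\tau$ from Section 3. The first curvature $\kappa_t$ is essentially immediate: since $T_t^* = \kappa_t\cdot_*N_t$ with $\|N_t\|_* = 1_*$ (where $^*$ now denotes the multiplicative derivative with respect to $s_t$), we have $\kappa_t = \|d_*T_t/_*d_*s_t\|_*$, and the norm computation carried out in the proof of the previous theorem already evaluated this quantity as $e^{(1+(\log f(s))^2)^{1/2}}$, which is exactly the claim.

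For $\tau_t$ I would use the defining formula $\tau_t = \langle N_t^*, B_t\rangle_*$ with the derivative still taken with respect to $s_t$. By the chain rule $d_*F/_*d_*s_t = (d_*F/_*d_*s)/_*\kappa$, which follows from Proposition 4.1, this reduces to an $s$-derivative followed by a multiplicative division by $\kappa$. So the concrete task is: multiplicatively differentiate $N_t = (-_*\t+_*f\cdot_*\b)/_*g$, where $g := e^{(1+(\log f)^2)^{1/2}}$, via the multiplicative quotient and Leibniz rules together with the Frenet formulas $\t^* = \kappa\cdot_*\n$ and $\b^* = -_*\tau\cdot_*\n$; pair the result with $B_t = (f\cdot_*\t+_*\b)/_*g$; and finally multiplicatively divide by $\kappa$.

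The quotient rule yields two pieces. The first comes from differentiating the numerator $-_*\t+_*f\cdot_*\b$; after substituting the Frenet formulas it splits into an $\n$-part (a multiple of $\kappa+_*f\cdot_*\tau$) and a clean $\b$-part carrying the factor $f^*$. The second piece, coming from $g^*$, is a multiplicative scalar multiple of the original numerator $-_*\t+_*f\cdot_*\b$. On pairing with $B_t$, the $\n$-part dies by orthogonality, and the $g^*$-piece dies as well thanks to the identity $\langle -_*\t+_*f\cdot_*\b,\ f\cdot_*\t+_*\b\rangle_* = 0_*$, which is a direct consequence of the multiplicative orthonormality of $\{\t,\n,\b\}$. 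Only the $f^*$-contribution survives; collecting the remaining factors of $g$ (from the quotient-rule denominator $g^{2_*}$ and the normalizer of $B_t$) and dividing by $\kappa$ produces exactly $\tau_t = \sigma\cdot_*e^{(1+(\log f)^2)^{1/2}}$ with $\sigma = f^*/_*e^{\log\kappa\,(1+(\log f)^2)^{3/2}}$.

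The main obstacle is the multiplicative algebraic bookkeeping: translating the quotient and Leibniz rules faithfully into exponents, checking the orthogonality identity that kills the $g^*$-contribution, and tracking the exponents on $1+(\log f)^2$ carefully enough that the $3/2$ power appearing in the denominator of $\sigma$ emerges correctly once the $g$-factors from the quotient rule and from the $B_t$-normalizer are reshuffled against the outer $g$-factor in the stated form $\tau_t = \sigma\cdot_*g$.
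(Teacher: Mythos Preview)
Your proposal is correct and follows essentially the same route as the paper: both read off $\kappa_t$ from the norm computation already done in the preceding theorem, and both obtain $\tau_t$ by multiplicatively differentiating $N_t=(-_*\t+_*f\cdot_*\b)/_*\lambda$ (the paper writes $\lambda$ for your $g$) via the quotient rule and Frenet formulas, then pairing with $B_t$ and dividing by $\kappa$. The only cosmetic difference is that the paper expands $N_t^*$ fully into $\t$, $\n$, $\b$ components and sees the $\lambda^*$-terms cancel algebraically as $f\cdot_*\lambda^*-_*\lambda^*\cdot_*f$, whereas you package that same cancellation as the orthogonality identity $\langle -_*\t+_*f\cdot_*\b,\ f\cdot_*\t+_*\b\rangle_*=0_*$; these are the same observation.
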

\begin{proof} 
The first equality follows by Eq. \eqref{d},
 \begin{equation*}
     \kappa_t=e^{\langle \log\x_t^{**}, \log\x_t^{**}\rangle^{\frac{1}{2}}}=e^{(1+(\log f(s))^2)^\frac{1}{2}}.
 \end{equation*}
Next considering  calculate the multiplicative derivative of $N_t$ with respect to $s_t$, considering Eq. \eqref{c}. Also, is chosen $e^{(1+(\log f(s))^2)^\frac{1}{2}}=\lambda$ in Eq. \eqref{c}, so
\begin{equation*}
 (d_*N_t/_*d_*s_t)\cdot_*(d_*s_t/_*d_*s)=[(-_*\kappa\cdot_*\n+_*f^*\cdot_*\b-_*f\cdot_*\tau\cdot_*\n)\cdot_*\lambda-_*\lambda\cdot_*(-_*\t+_*f\cdot_\b)]/_*\lambda^{2*}   
\end{equation*}
after this we can write
\begin{equation*}
    N^{*}_t\cdot_*\kappa=[\lambda^{*}\cdot_*\t-_*\lambda\cdot_*(\kappa+_*f\cdot_*\tau)\cdot_*\n+_*(\lambda\cdot_*f^*-_*\lambda^*\cdot_*f)\cdot_*\b]/_*\lambda^{2*}  
\end{equation*}
and so
\begin{equation}
    N^{*}_t=[\lambda^{*}\cdot_*\t-_*\lambda\cdot_*(\kappa+_*f\cdot_*\tau)\cdot_*\n+_*(\lambda\cdot_*f^*-_*\lambda^*\cdot_*f)\cdot_*\b]/_*\lambda^{2*}\cdot_*\kappa. \label{xx} 
\end{equation}
Then from Eqs. \eqref{e} and \eqref{xx}, we obtain
\begin{eqnarray*}
  \tau_t=\langle\log N^*_t,\log B_t\rangle_*&=&(f\cdot_*\lambda^*)/_*\lambda^{3*}\cdot_*\kappa+_*(\lambda\cdot_*f^*-_*\lambda^*\cdot_*f)/_*\lambda^{3*}\cdot_*\kappa\\
  &=&(f^*\cdot_*\lambda)/_*\lambda^{3*}\cdot_*\kappa.
 \end{eqnarray*}
Here again let's consider the choice $e^{(1+(\log f(s))^2)^\frac{1}{2}}=\lambda$, so we get
\begin{equation*}
 [f^*/_*(e^{(1+(\log f(s))^2)^\frac{3}{2}}\cdot_*\kappa)]\cdot_*e^{(1+(\log f(s))^2)^\frac{1}{2}}.  
\end{equation*}
Finally, if a choice is made in the form $\sigma=f^*/_*(e^{\log\kappa(1+(\log f(s))^2)^\frac{3}{2}}\cdot_*\kappa)$, the above-mentioned equation becomes
\begin{equation*}
     \tau_t=\sigma\cdot_*e^{(1+(\log f(s))^2)^\frac{1}{2}}.
 \end{equation*}
 \end{proof}
Using similar arguments, we may have the following results.
\begin{proposition}
Let $\x_n$ be the multiplicative normal indicatrix of a multiplicative naturally parameterized curve $\x$. The multiplicative arc parameter $s_n$ of the multiplicative curve $\x_n$ provides
\begin{equation}
  s_n=\int_*\kappa(s)\cdot_*e^{(1+(\log f(s))^2)^\frac{1}{2}}\cdot_*d_*s  \label{n2}
\end{equation}
where $f=f(s)$ and $f=\tau/_*\kappa$.
\end{proposition}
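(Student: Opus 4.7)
The plan is to mimic the proof of Proposition 4.1 almost verbatim, replacing the tangent indicatrix $\x_t = \t$ with the normal indicatrix $\x_n = \n$. The core of the argument is the multiplicative arc length formula \eqref{a} applied to $\x_n$, so everything reduces to computing $\|\n^*(s)\|_*$ and recognizing it as $\kappa(s)\cdot_* e^{(1+(\log f(s))^2)^{1/2}}$.

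First I would write $s_n = \int_* \|\n^*(u)\|_* \cdot_* d_* u$ directly from Eq.~\eqref{a}. Then I would invoke the second multiplicative Frenet formula $\n^* = -_*\kappa\cdot_*\t +_* \tau\cdot_*\b$. Passing to logarithms, since $\log(-_*\kappa) = -\log\kappa$, $\log(k\cdot_* \u) = \log k\, \log\u$, and $\log(\u +_* \v) = \log\u + \log\v$, we get
\begin{equation*}
\log\n^*(s) \;=\; -\log\kappa(s)\,\log\t(s) \;+\; \log\tau(s)\,\log\b(s).
\end{equation*}
Because $\{\t,\n,\b\}$ is multiplicative orthonormal, the Euclidean inner products satisfy $\langle\log\t,\log\t\rangle = \langle\log\b,\log\b\rangle = 1$ and $\langle\log\t,\log\b\rangle = 0$, which yields
\begin{equation*}
\langle \log\n^*(s),\log\n^*(s)\rangle \;=\; (\log\kappa(s))^2 + (\log\tau(s))^2.
\end{equation*}

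The next step is to rewrite this in the form that appears in the statement. Factoring out $(\log\kappa)^2$ and using $\log f = \log(\tau/_*\kappa) = \log\tau/\log\kappa$, I get
\begin{equation*}
\|\n^*(s)\|_* \;=\; e^{\sqrt{(\log\kappa)^2 + (\log\tau)^2}} \;=\; e^{\log\kappa\,\sqrt{1+(\log f(s))^2}},
\end{equation*}
and by the definition of $\cdot_*$ this last expression is exactly $\kappa(s)\cdot_* e^{(1+(\log f(s))^2)^{1/2}}$. Substituting back into the integral gives \eqref{n2}.

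The only step that requires any genuine care is the norm computation, where one must carefully track how the multiplicative scalar multiplication interacts with the logarithm (turning $\kappa\cdot_*\t$ into $\log\kappa\cdot\log\t$ rather than $\log\kappa + \log\t$), and make sure the factorization that produces the $\kappa\cdot_*(\ldots)$ prefactor is consistent with the multiplicative product rather than a classical product. After that, the reduction to \eqref{n2} is purely algebraic and the assumption $\kappa \neq 0_*$ is used implicitly to divide by $\log\kappa$ when introducing $f$.
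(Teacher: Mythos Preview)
Your proposal is correct and is exactly the argument the paper intends: the paper does not spell out a proof of this proposition but simply writes ``Using similar arguments, we may have the following results,'' meaning the computation for $\x_t$ in Proposition~4.1 is to be repeated with $\x_n=\n$ and the second Frenet formula in place of the first. Your norm computation $\|\n^*\|_*=e^{\sqrt{(\log\kappa)^2+(\log\tau)^2}}=\kappa\cdot_* e^{(1+(\log f)^2)^{1/2}}$ is precisely that analogue (and your implicit use of $\log\kappa>0$ is justified since $\kappa=\|\x^{**}\|_*>0_*$).
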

\begin{theorem}
Let $\x_n$ be the multiplicative normal indicatrix of a multiplicative naturally parameterized curve $\x$. The multiplicative Frenet vectors $\{T_n,N_n,B_n\}$ of $\x_n$ as follows
\begin{eqnarray*}
 T_n&=&(-_*\t+_*f\cdot_*\b)/_*e^{(1+(\log f(s))^2)^\frac{1}{2}}, \\
 N_n&=&(\sigma/_*e^{(1+(\log \sigma(s))^2)^\frac{1}{2}})\cdot_*[((f\cdot_*\t+_*\b)/_*e^{(1+(\log f(s))^2)^\frac{1}{2}})-_*\n/_*\sigma],\\
 B_n&=&(e/_*e^{(1+(\log \sigma(s))^2)^\frac{1}{2}})\cdot_*[((f\cdot_*\t+_*\b)/_*e^{(1+(\log f(s))^2)^\frac{1}{2}}-_*\n\cdot_*\sigma],
\end{eqnarray*}
where $\sigma=f^*/_*(e^{\log\kappa(1+(\log f(s))^2)^\frac{3}{2}}).$
\end{theorem}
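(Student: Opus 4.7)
The plan is to mimic the derivation used for the tangent indicatrix in Theorem~4.2, starting from the defining identity $\x_n(s)=\n(s)$ and propagating it through one more round of multiplicative differentiation to produce each Frenet vector in turn.

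First I would apply $d_*/d_*s$ to $\x_n=\n$ and use the second multiplicative Frenet formula $\n^*=-_*\kappa\cdot_*\t+_*\tau\cdot_*\b$ to obtain
\begin{equation*}
T_n\cdot_*(d_*s_n/_*d_*s)\;=\;-_*\kappa\cdot_*\t+_*\tau\cdot_*\b .
\end{equation*}
Taking multiplicative norms and using $\|\t\|_*=\|\b\|_*=1_*$ together with their multiplicative orthogonality gives $d_*s_n/_*d_*s=\kappa\cdot_*e^{(1+(\log f(s))^2)^{1/2}}$, which is exactly the integrand of Proposition~4.4 (with $f=\tau/_*\kappa$). Dividing the displayed identity by this scalar produces the first claim
\begin{equation*}
T_n=(-_*\t+_*f\cdot_*\b)/_*e^{(1+(\log f(s))^2)^{1/2}}.
\end{equation*}

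Next I would differentiate $T_n$ with respect to $s$, write $\lambda=e^{(1+(\log f(s))^2)^{1/2}}$ for brevity, and use the multiplicative quotient rule together with the Frenet formulas applied to $\t^*=\kappa\cdot_*\n$ and $\b^*=-_*\tau\cdot_*\n$. The outcome is a combination
\begin{equation*}
T_n^{*}=\bigl[\lambda^{*}\cdot_*\t-_*\lambda\cdot_*(\kappa+_*f\cdot_*\tau)\cdot_*\n+_*(\lambda\cdot_*f^{*}-_*\lambda^{*}\cdot_*f)\cdot_*\b\bigr]/_*\lambda^{2*},
\end{equation*}
analogous to equation \eqref{xx} in the previous proof. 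Dividing by $d_*s_n/_*d_*s=\kappa\cdot_*\lambda$ converts this into $d_*T_n/_*d_*s_n$. Then, reusing the algebraic identity $\sigma=f^{*}/_*(e^{\log\kappa(1+(\log f(s))^2)^{3/2}})$ introduced in Proposition~4.3, one can rearrange the $\t$ and $\b$ terms into the combination $(f\cdot_*\t+_*\b)/_*\lambda$ (i.e.\ into $B_t$), leaving a separate $\n$-coefficient. This is the point at which the expression
\begin{equation*}
d_*T_n/_*d_*s_n=\sigma\cdot_*\bigl[(f\cdot_*\t+_*\b)/_*\lambda\bigr]-_*\n
\end{equation*}
emerges. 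Taking multiplicative norms and using that $(f\cdot_*\t+_*\b)/_*\lambda$ is a multiplicative unit vector multiplicatively orthogonal to $\n$ yields $\|d_*T_n/_*d_*s_n\|_*=e^{(1+(\log\sigma(s))^2)^{1/2}}$, and dividing by this scalar gives the stated formula for $N_n$.

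Finally, I would obtain $B_n$ from the multiplicative cross product $B_n=T_n\times_*N_n$. Using $\t\times_*\n=\b$, $\n\times_*\b=\t$, $\b\times_*\t=\n$, together with the expressions already derived for $T_n$ and $N_n$, this cross product collapses, after cancelling the common denominators, to the displayed formula for $B_n$. The main obstacle I anticipate is purely book-keeping: the quotient-rule expansion of $T_n^{*}$ produces four terms that must be rearranged so as to isolate $\n$ and identify the remaining combination as $\sigma\cdot_* B_t$; once that recognition is made, the normalization step and the cross product are essentially mechanical. All steps parallel those of Theorem~4.2 and Proposition~4.3, so no genuinely new technique is required beyond careful application of the multiplicative Frenet formulas and the quotient and product rules listed in Section~2.
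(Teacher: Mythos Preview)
Your proposal is correct and follows exactly the route the paper intends: the paper does not give an explicit proof of this theorem but simply states ``Using similar arguments, we may have the following results,'' referring back to the computations in Theorem~4.2 and Proposition~4.3. Your outline---differentiating $\x_n=\n$, normalizing via Proposition~4.4 to obtain $T_n$, differentiating again and recognizing the combination $\sigma\cdot_*B_t-_*\n$, then taking the multiplicative cross product for $B_n$---is precisely that parallel argument carried out in detail.
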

\begin{proposition}
Let $\x_n$ be the multiplicative normal indicatrix of the multiplicative curve $\x$. The multiplicative curvatures of the normal indicatrix $\x_n$ are described as follows
\begin{equation}
\kappa_n=e^{(1+(\log \sigma(s))^2)}\quad\text{and}\quad \tau_n=\Gamma\cdot_*e^{(1+(\log f(s))^2) }\label{n1}
\end{equation}
  where $\Gamma=\sigma^*/_*(e^{\log\kappa(1+(\log f(s))^2)(1+(\log \sigma(s))^2)^\frac{3}{2}}.$
\end{proposition}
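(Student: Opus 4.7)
The plan is to mirror the argument of Proposition 4.3, now applied to the multiplicative normal indicatrix $\x_n$ rather than to the tangent indicatrix $\x_t$. The ingredients are the multiplicative Frenet frame $\{T_n,N_n,B_n\}$ already computed in Theorem 4.3, the multiplicative arc length relation $d_*s_n/_*d_*s=\kappa\cdot_*e^{(1+(\log f(s))^2)^{1/2}}$ furnished by Proposition 4.4, and the defining formulas $\kappa_n=\|d_*T_n/_*d_*s_n\|_*$ and $\tau_n=\langle N_n^{*},B_n\rangle_*$ coming from the multiplicative Frenet equations.

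For the curvature $\kappa_n$, I would exploit the observation that the expression for $T_n$ in Theorem 4.3 coincides with the vector $N_t$ already appearing in Theorem 4.1. Consequently, its $s$-derivative is exactly the quantity $N_t^{*}$ that was computed in the proof of Proposition 4.3, namely a linear combination of $\t,\n,\b$ with coefficients involving $\lambda=e^{(1+(\log f(s))^2)^{1/2}}$, $\lambda^{*}$, $\kappa$, $\tau$, and $f^{*}$. Passing from $s$ to $s_n$ via the chain rule introduces a division by $\kappa\cdot_*\lambda$, and taking the multiplicative norm of the result collapses the coefficients into an exponential whose exponent is built out of $\sigma=f^{*}/_*(e^{\log\kappa(1+(\log f(s))^2)^{3/2}})$. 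This produces the stated value of $\kappa_n$.

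For the torsion $\tau_n$, I would multiplicatively differentiate the expression for $N_n$ from Theorem 4.3. This requires the multiplicative Leibniz rule on the outer scalar factor $\sigma/_*e^{(1+(\log\sigma(s))^2)^{1/2}}$, combined with the multiplicative Frenet formulas for $\t^{*},\n^{*},\b^{*}$ inside the bracket. After dividing by $d_*s_n/_*d_*s$ to convert to differentiation in $s_n$, I would take the multiplicative inner product with $B_n$. Multiplicative orthonormality of $\{\t,\n,\b\}$ kills most cross terms, and the $\t$- and $\b$-components of $B_n$ retain only contributions proportional to $\sigma^{*}$. Reassembling the surviving radicals via the definition of $\Gamma$ yields $\tau_n=\Gamma\cdot_*e^{(1+(\log f(s))^2)}$ as stated.

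The principal obstacle will be the symbolic bookkeeping in the torsion step, since $N_n$ itself already carries two nested radicals, built from $f$ and $\sigma$, as well as the factor $\sigma$, and differentiating it produces a proliferation of terms whose multiplicative exponents must be tracked with care. The safest strategy is to pass through the generator $\log$ at every intermediate stage, reduce the computation to an ordinary Euclidean one, and reinsert the multiplicative operations only after the cancellations forced by the orthonormality of $\{\t,\n,\b\}$ have been carried out; this keeps the multiplicative Leibniz and chain rules manageable and avoids sign errors hidden in the $-_*$ and $/_*$ operations.
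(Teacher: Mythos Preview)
Your proposal is correct and follows exactly the route the paper intends: the paper gives no explicit proof for this proposition but prefaces it with ``Using similar arguments, we may have the following results,'' referring back to the computation for the tangent indicatrix in Proposition~4.3, and your plan is precisely to replay that computation with $\x_n$ in place of $\x_t$, drawing on the arc-length relation of Proposition~4.4 and the Frenet frame of Theorem~4.5 (your ``Theorem~4.3'' appears to be a misnumbering of the latter). Your shortcut of recognizing $T_n=N_t$ and recycling the derivative already computed in the proof of Proposition~4.3 is a nice economy fully in the spirit of the paper's ``similar arguments.''
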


\begin{proposition}
Let $\x_b$ be the multiplicative binormal indicatrix of a multiplicative naturally parameterized curve $\x$. The multiplicative arc parameter $s_b$ of the multiplicative curve $\x_b$ provides
\begin{equation*}
  s_b=\int_*\tau(s)d_*s.  
\end{equation*}
\end{proposition}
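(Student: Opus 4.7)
The plan is to follow exactly the template used in Proposition 4.1 and Proposition 4.4, namely apply the arc-length formula \eqref{a} to the reparametrized curve $\x_b = \b$ and simplify using the third multiplicative Frenet formula.

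First I would write $\x_b(s) = \b(s)$ by definition of the multiplicative binormal indicatrix, so that $\x_b^*(s) = \b^*(s)$. Then by \eqref{a},
\begin{equation*}
 s_b = \int_{*}^{s} \|\x_b^*(u)\|_* \cdot_* d_* u = \int_{*}^{s} \|\b^*(u)\|_* \cdot_* d_* u.
\end{equation*}

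Next I would invoke the third multiplicative Frenet formula $\b^* = -_*\tau \cdot_* \n$. Taking multiplicative norms and using that $\n$ is a multiplicative unit vector (hence $\|\n\|_* = 1_*$) together with $\|-_*\tau\cdot_*\n\|_* = |\tau|_* \cdot_* \|\n\|_*$, we get $\|\b^*(u)\|_* = |\tau(u)|_*$. Under the implicit sign convention already used throughout Section 4 (working with $\tau > 0_*$, as is customary so that the expressions in the preceding propositions are well defined without absolute-value bars), this reduces to $\|\b^*(u)\|_* = \tau(u)$, and substituting into the above integral yields
\begin{equation*}
 s_b = \int_* \tau(s) \cdot_* d_* s,
\end{equation*}
which is the claim.

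The argument is essentially a one-line computation, so the main obstacle is not technical but notational: one must be careful that the scalar $-_*\tau$ pulls out of the multiplicative norm correctly (this uses the multiplicative homogeneity $\|k\cdot_*\u\|_* = |k|_*\cdot_*\|\u\|_*$, which follows directly from the definitions $\|\u\|_* = e^{\sqrt{\langle\log\u,\log\u\rangle}}$ and $k\cdot_*\u = e^{\log k \log \u}$), and that $\|\n\|_*=1_*$ is guaranteed by the definition of the multiplicative principal normal $\n = \x^{**}/_*\|\x^{**}\|_*$. No additional machinery beyond the multiplicative Frenet formulas and the arc-length formula \eqref{a} is required; this is why the statement is presented as a proposition parallel to Propositions 4.1 and 4.4 rather than a theorem.
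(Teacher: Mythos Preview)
Your proof is correct and follows exactly the template the paper intends: the paper does not write out a separate proof for this proposition but instead prefaces it (and the surrounding results) with ``Using similar arguments, we may have the following results,'' referring back to the argument given for Proposition~4.1. Your computation via the arc-length formula~\eqref{a} and the third multiplicative Frenet equation $\b^* = -_*\tau\cdot_*\n$ is precisely that similar argument.
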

\begin{theorem}
Let $\x_b$ be the multiplicative binormal indicatrix of a multiplicative naturally parameterized curve $\x$. The multiplicative Frenet vectors $\{T_b,N_b,B_b\}$ of $\x_b$ satisfy
\begin{eqnarray*}
 T_b&=&-_*\n, \\
 N_b&=&(\t-_*f\cdot_*\b)/_*e^{(1+(\log f(s))^2)^\frac{1}{2}},\\
 B_b&=&(f\cdot_*\t+_*\b)/_*e^{(1+(\log f(s))^2)^\frac{1}{2}},
\end{eqnarray*}
where $f=f(s)$ and $f=\tau/_*\kappa$.
\end{theorem}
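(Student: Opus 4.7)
The plan is to mirror the structure of the proof of the tangent–indicatrix theorem, using the chain rule together with the multiplicative Frenet formulae $\t^*=\kappa\cdot_*\n$, $\n^*=-_*\kappa\cdot_*\t+_*\tau\cdot_*\b$, $\b^*=-_*\tau\cdot_*\n$, and the cyclic identities $\t\times_*\n=\b$, $\n\times_*\b=\t$, $\b\times_*\t=\n$. The definition $\x_b=\b$ together with \textit{Proposition 4.6} gives $d_*s_b/_*d_*s=\|\b^*\|_*=\tau$, so the first step is to write $T_b=d_*\x_b/_*d_*s_b=\b^*/_*\tau=(-_*\tau\cdot_*\n)/_*\tau=-_*\n$, which produces the tangent formula immediately.

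Next I would multiplicatively differentiate $T_b=-_*\n$ in $s$ and divide again by $d_*s_b/_*d_*s=\tau$ to obtain
\begin{equation*}
 d_*T_b/_*d_*s_b \;=\; (-_*\n^*)/_*\tau \;=\; (\kappa\cdot_*\t-_*\tau\cdot_*\b)/_*\tau \;=\; (1_*/_*f)\cdot_*\t -_* \b,
\end{equation*}
with $f=\tau/_*\kappa$. Because $f$ is multiplicative positive, rescaling by $f\cdot_*(\,\cdot\,)$ does not change the multiplicative unit direction, and it converts the expression into the more symmetric $\t-_*f\cdot_*\b$. Taking the multiplicative norm and exploiting that $\{\t,\n,\b\}$ is multiplicative orthonormal (so $\langle\log\t,\log\t\rangle=\langle\log\b,\log\b\rangle=1$ and $\langle\log\t,\log\b\rangle=0$) yields $\|\t-_*f\cdot_*\b\|_*=e^{(1+(\log f(s))^2)^{1/2}}$, exactly as in the tangent–indicatrix computation that produced equation \eqref{d}. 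Dividing then gives the claimed formula for $N_b$.

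For $B_b$ the plan is to invoke $B_b=T_b\times_*N_b$ and reduce everything using the cyclic cross–product identities. Specifically, $(-_*\n)\times_*\t = -_*(\n\times_*\t)=-_*(-_*\b)=\b$, and $(-_*\n)\times_*(-_*f\cdot_*\b)=f\cdot_*(\n\times_*\b)=f\cdot_*\t$. Combining these and keeping the common denominator $e^{(1+(\log f(s))^2)^{1/2}}$ intact (the norm is a multiplicative scalar and commutes past $\times_*$) produces
\begin{equation*}
 B_b \;=\; (f\cdot_*\t+_*\b)/_*e^{(1+(\log f(s))^2)^{1/2}},
\end{equation*}
which is the stated expression.

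The bookkeeping with the operators $+_*,-_*,\cdot_*,/_*$ is the only real hazard, and it is where I expect the main obstacle: in particular, the step of recognizing that $(1_*/_*f)\cdot_*\t-_*\b$ and $\t-_*f\cdot_*\b$ define the same multiplicative unit vector (because multiplication by a positive multiplicative scalar rescales the multiplicative norm by the same factor) has to be justified carefully, since the naive classical cancellation is not automatic in the $\r_*$ arithmetic. Once this and the sign convention $\tau>0_*$ are pinned down, the rest is a direct translation of the tangent–indicatrix argument, so the proof is essentially parallel to that of the first theorem in this subsection.
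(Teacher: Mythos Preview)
Your proposal is correct and is precisely the argument the paper intends: the paper does not spell out a proof for this theorem at all, merely prefacing the binormal (and normal) results with ``Using similar arguments, we may have the following results,'' i.e.\ it expects the reader to rerun the tangent--indicatrix computation with $\x_b=\b$ and $d_*s_b/_*d_*s=\tau$ in place of $\x_t=\t$ and $d_*s_t/_*d_*s=\kappa$. Your derivation does exactly this, and your flagged subtlety---that passing from $(1_*/_*f)\cdot_*\t-_*\b$ to $\t-_*f\cdot_*\b$ preserves the unit direction only when $f>_*0_*$, which follows from $\kappa>_*0_*$ and the implicit assumption $\tau>_*0_*$ already built into Proposition~4.7---is a genuine point the paper glosses over.
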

\begin{proposition}
Let the multiplicative curve, denoted as $\x_b$, be the binormal indicatrix of the multiplicative curve $\x$. Then, the multiplicative curvatures of the $\x_b$ are described as follows
  \begin{equation*}
     \kappa_b=e^{(1+(\log f(s))^2)^\frac{1}{2}}/_*f\quad\text{and}\quad \tau_b=(-_*\sigma\cdot_*e^{(1+(\log \sigma(s))^2)^\frac{1}{2}})/_*f
  \end{equation*}
  where $\sigma=f^*/_*(\kappa\cdot_*(e^{1+(\log f(s))^2})^{\frac{3}{2}*}).$
\end{proposition}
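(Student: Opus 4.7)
The plan is to mirror the argument of Proposition 4.4 (and its normal counterpart, Proposition 4.6), using the already-computed multiplicative Frenet vectors $\{T_b,N_b,B_b\}$ of Theorem 4.8 together with the arc-length relation $d_*s_b/_*d_*s=\tau$ recorded in Proposition 4.7. First I would start from $T_b=-_*\n$, take its multiplicative derivative in $s$, and apply the multiplicative Frenet formulas for $\x$ to obtain
\begin{equation*}
T_b^{*}=-_*\n^{*}=\kappa\cdot_*\t-_*\tau\cdot_*\b.
\end{equation*}
Dividing by $d_*s_b/_*d_*s=\tau$ then yields $d_*T_b/_*d_*s_b=(\kappa/_*\tau)\cdot_*\t-_*\b$, which is the raw expression from which both $\kappa_b$ and $N_b$ will be read off.

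Next I would obtain $\kappa_b$ by taking the multiplicative norm of this vector. Since $\t$ and $\b$ are multiplicatively orthonormal, a direct calculation in log coordinates gives
\begin{equation*}
\kappa_b=\|d_*T_b/_*d_*s_b\|_*=e^{((\log\kappa/\log\tau)^2+1)^{1/2}},
\end{equation*}
which, using $\log f=\log\tau/\log\kappa$, rearranges into the stated $\kappa_b=e^{(1+(\log f(s))^2)^{1/2}}/_*f$. Dividing $d_*T_b/_*d_*s_b$ by $\kappa_b$ then recovers the expression for $N_b$ given in Theorem 4.8, which serves as a consistency check.

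For the torsion $\tau_b=\langle d_*N_b/_*d_*s_b,\,B_b\rangle_*$, I would differentiate $N_b=(\t-_*f\cdot_*\b)/_*\lambda$ (with $\lambda=e^{(1+(\log f(s))^2)^{1/2}}$) with respect to $s$ via the multiplicative product and quotient rules, substitute the multiplicative Frenet formulas, and then divide by $\tau$ to pass from $s$ to $s_b$. Taking the multiplicative inner product with $B_b=(f\cdot_*\t+_*\b)/_*\lambda$ kills the $\n$-component, so that only the $\t$- and $\b$-coefficients contribute. As in the tangent case, the terms involving $\lambda^{*}$ cancel and the expression collapses to something proportional to $f^{*}$, essentially $-_*f^{*}/_*(\tau\cdot_*\lambda^{2_*})$.

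The main obstacle will be the final algebraic repackaging into the stated form $(-_*\sigma\cdot_*e^{(1+(\log\sigma(s))^2)^{1/2}})/_*f$. At that step one substitutes the definition $\sigma=f^{*}/_*(\kappa\cdot_*(e^{1+(\log f(s))^2})^{3/2_*})$ and uses $\tau=f\cdot_*\kappa$ together with $\lambda^{2_*}=e^{1+(\log f(s))^2}$ to isolate the factor $\sigma$, arguing exactly as in the last step of Proposition 4.4. The delicate point throughout is not the geometry but the bookkeeping: one has to track the multiplicative chain and quotient rules when differentiating $\lambda$ through $f$, and consistently distinguish the multiplicative operations $+_*$, $\cdot_*$, $/_*$ from their classical counterparts while $\|\cdot\|_*$ and $\langle\cdot,\cdot\rangle_*$ mix logarithms in a nonlinear way.
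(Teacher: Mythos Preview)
Your approach is correct and coincides with the paper's: no separate proof is given there, the paper simply declares that Propositions~4.4--4.9 follow ``using similar arguments'' to the tangent-indicatrix computations of Propositions~4.1--4.3, which is exactly the derivation you sketch. One remark on your final repackaging step: the computation you describe actually yields $\tau_b=(-_*\sigma\cdot_*e^{(1+(\log f(s))^2)^{1/2}})/_*f$ (with $f$, not $\sigma$, inside the exponent), so the occurrence of $\sigma$ in the printed statement appears to be a misprint rather than something your algebra must reproduce.
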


\begin{example}
Let $\x$ $:I\subset\mathbb{R_*}\rightarrow \E_*^{3}$ be multiplicative naturally parametrized curve in $\mathbb{R}_*^{3}$  parameterized by%
\begin{equation*}
  \x(s)=\left(e^s,e^{\frac{e^2}{2}},e^{\frac{e^3}{6}}\right).  
\end{equation*}
In Fig. \ref{figa}, we present the graph of the multiplicative spherical indicatrices of $\x$.
\end{example}
\begin{figure}[hbtp]
\begin{center}
\includegraphics[width=.9\textwidth]{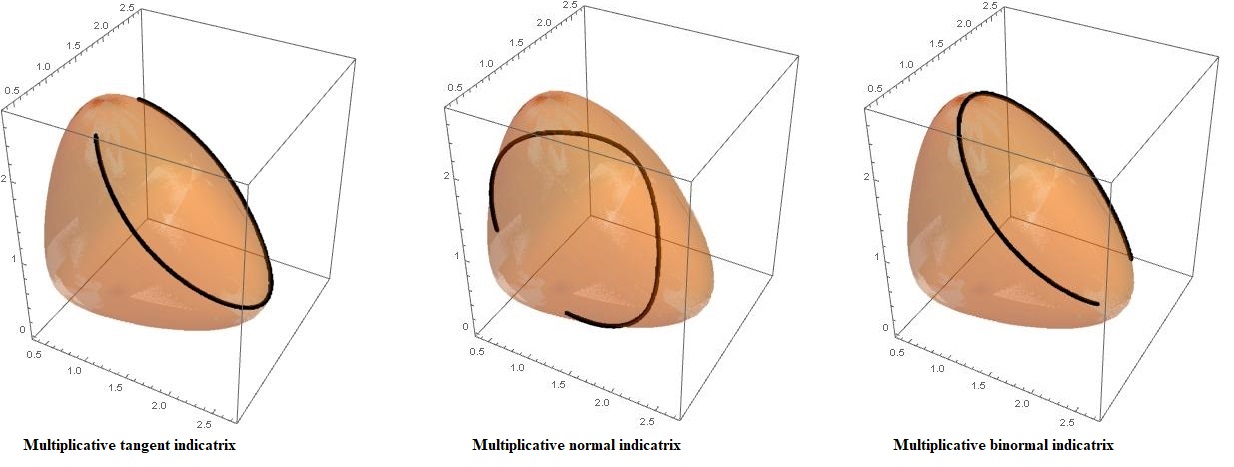}\\
\end{center}
\caption{\text{Multiplicative spherical indicatrices.} \label{figa}}
\end{figure}
%%%%%%%%%%%%%%%%%%%%%%%
\subsection{Multiplicative Helices} \label{sec2}
%%%%%%%%%%%%%%%%%%%%%%%
\begin{definition}
Consider the multiplicative curve $\x: I \subset \mathbb{R}_* \to \mathbb{E}_*^3$ with $\kappa\neq 0_*$. If the multiplicative tangent vector field of the curve $\x$ makes a constant multiplicative angle with a constant multiplicative vector, then the curve $\x$ is referred to as a multiplicative general helix \cite{svetlin2}. 
\end{definition}
\begin{theorem}
Let $\x: I \subset \mathbb{R}_* \to \mathbb{E}_*^3$ be a multiplicative curve with $\kappa\neq 0_*$. The multiplicative space curve $\x$ is a multiplicative general helix if and only if the multiplicative ratio of multiplicative torsion and multiplicative curvature is constant. In other words, it is
 $$\kappa/_*\tau=c,\quad c\in\mathbb{R}_*.$$
\end{theorem}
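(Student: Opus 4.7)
The plan is to carry out the multiplicative analogue of Lancret's classical theorem, splitting the argument into the two implications and using the multiplicative Frenet formulas stated earlier in the excerpt together with the differentiation rules for the multiplicative inner product.

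For the ($\Rightarrow$) direction, I would start from the definition: there exists a constant multiplicative unit vector $\u$ and a constant multiplicative angle $\theta$ such that $\langle \t,\u\rangle_*=\cos_*\theta$. Taking the multiplicative derivative with respect to $s$ and using that $\u$ is constant gives $\langle \t^*,\u\rangle_*=0_*$, and the first Frenet formula $\t^*=\kappa\cdot_*\n$ together with $\kappa\neq 0_*$ yields $\langle \n,\u\rangle_*=0_*$. Therefore $\u$ belongs to the multiplicative plane spanned by $\t$ and $\b$, so I can write $\u=\cos_*\theta\cdot_*\t+_*\sin_*\theta\cdot_*\b$. Differentiating this expression once more and applying the Frenet formulas $\t^*=\kappa\cdot_*\n$ and $\b^*=-_*\tau\cdot_*\n$, the $\n$-coefficient must vanish, which gives the scalar relation $\kappa\cdot_*\cos_*\theta-_*\tau\cdot_*\sin_*\theta=0_*$, i.e.\ $\kappa/_*\tau=\tan_*\theta$, a multiplicative constant $c$.

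For the ($\Leftarrow$) direction, suppose $\kappa/_*\tau=c$ is a multiplicative constant. Choose a multiplicative angle $\theta$ with $\tan_*\theta=c$ and define the candidate axis
\[
\u=\cos_*\theta\cdot_*\t+_*\sin_*\theta\cdot_*\b.
\]
Computing $\u^*$ via the Frenet formulas produces $(\kappa\cdot_*\cos_*\theta-_*\tau\cdot_*\sin_*\theta)\cdot_*\n$, which is $0_*$ by the hypothesis on $c$. Hence $\u$ is a constant multiplicative vector, and then $\langle \t,\u\rangle_*=\cos_*\theta$ is constant, exhibiting $\x$ as a multiplicative general helix by the definition above.

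The main technical obstacle is not the logical structure, which mirrors the Euclidean case, but the careful bookkeeping of the multiplicative operations: one must verify that the product rule $(f\cdot_* g)^*=f^*\cdot_* g+_* f\cdot_* g^*$ and the Leibniz-type rule for $\langle\,,\,\rangle_*$ transfer correctly, that $\cos_*\theta$ and $\sin_*\theta$ are indeed multiplicatively constant whenever $\theta$ is, and that the multiplicative identity $\sin_*^{2*}\theta+_*\cos_*^{2*}\theta=1_*$ recorded in Section~2 is used consistently so that $\u$ remains a multiplicative unit vector. Once these bookkeeping items are verified, the argument is a routine translation of the classical Lancret proof into the $\E_*^3$ framework.
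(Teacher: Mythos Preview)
Your argument is correct: it is the standard Lancret proof translated faithfully into the multiplicative setting, and the bookkeeping you flag (product rule for $\cdot_*$, Leibniz rule for $\langle\,,\,\rangle_*$, constancy of $\cos_*\theta$, $\sin_*\theta$, and the Pythagorean identity) all carries over because each of these is defined by conjugating the ordinary operation through $\exp/\log$. The only minor point to tighten is the passage from $\langle\n,\u\rangle_*=0_*$ to $\u=\cos_*\theta\cdot_*\t+_*\sin_*\theta\cdot_*\b$: you should remark that the coefficient of $\b$ equals $\langle\b,\u\rangle_*$ and that, up to a sign choice for $\u$, this is $\sin_*\theta$ because $\u$ is a multiplicative unit vector in the multiplicative $(\t,\b)$-plane.

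As for comparison with the paper: the paper does not actually supply a proof of this theorem. Its entire proof reads ``The proof of the theorem is explained by Georgiev S.\ (see \cite{svetlin2}).'' So your write-up is not just a different route---it is the only route on the page. Your approach is presumably the same one Georgiev gives in the cited book (there is essentially only one natural proof of Lancret's theorem), so there is no methodological divergence to discuss; you have simply filled in what the authors chose to outsource.
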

\begin{proof}
 The proof of the theorem is explained by Georgiev S.(see \cite{svetlin2}).   
\end{proof}
\begin{example}
Let $\x$ $:I\subset\mathbb{R_*}\rightarrow \E_*^{3}$ be multiplicative naturally parametrized general helix curve in $\mathbb{R}_*^{3}$ parameterized by%
\begin{equation*}
\x(s)=\left(e^3/_*e^5\cdot_*\cos_*s,e^3/_*e^5\cdot_*\sin_*s,e^4/_*e^5\cdot_*e^s\right).
\end{equation*}
With the help of multiplicative curvature formulas from Eq. \eqref{cur}, we give
\begin{equation*}
 \kappa(s)=e^3/_*e^5\quad\text{and}\quad \tau=e^4/_*e^5.   
\end{equation*}
Since $$\tau/_*\kappa=e^{(\log e^4/\log e^5)/(\log e^3/\log e^5)}=e^4/_*e^3$$
is a multiplicative constant, $\x$ is a multiplicative helix. In Fig. \ref{fig2}, we present the graph of the multiplicative general helix.
\begin{figure}[hbtp]
\begin{center}
\includegraphics[width=.8\textwidth]{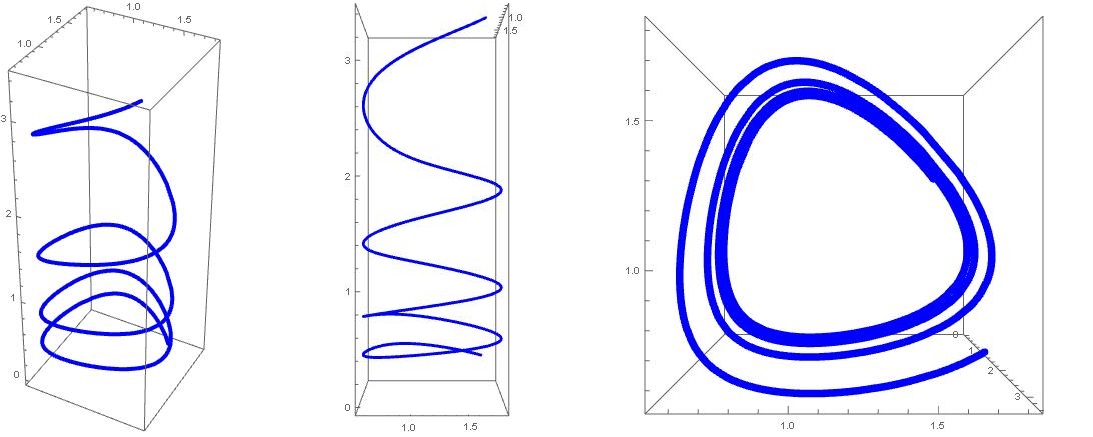}\\
\end{center}
\caption{\text{Multiplicative general helix.} \label{fig2}}
\end{figure}
\end{example}

\begin{definition}
Let $\x: I \subset \mathbb{R}_* \to \mathbb{E}_*^3$ be the multiplicative curve with $\kappa\neq 0_*$. If the multiplicative normal vector field of the curve $\x$ makes a constant multiplicative angle with a constant multiplicative vector, then the curve $\x$ is referred to as a multiplicative slant helix. 
\end{definition}
\begin{theorem}
Let $\x: I \subset \mathbb{R}_* \to \mathbb{E}_*^3$ be multiplicative curve  with $\kappa\neq 0_*$. The multiplicative curve $\x$ is a multiplicative slant helix if and only if the following equality is a multiplicative constant function.
 \begin{equation}
  \sigma(s)=[\kappa^{2*}(s)/_*(\kappa^{2*}(s)+_*\tau^{2*}(s))^{\frac{3}{2}*}]\cdot_*(\tau(s)/_*\kappa(s))^*. \label{n3}
\end{equation}
\end{theorem}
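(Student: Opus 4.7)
The plan is to mirror the classical Izumiya--Takeuchi characterization of slant helices, carried over to the multiplicative setting by replacing every arithmetic operation and derivative with its purely multiplicative counterpart. First I would invoke the definition directly: $\x$ is a multiplicative slant helix precisely when there is a fixed multiplicative vector $\u$ for which $\langle \n(s),\u\rangle_*$ is multiplicatively constant. Expanding $\u$ in the multiplicative Frenet frame,
\begin{equation*}
\u = a\cdot_*\t +_* b\cdot_*\n +_* c\cdot_*\b,
\end{equation*}
the coefficient $b=\langle\n,\u\rangle_*$ is already constant, and so is $a^{2_*}+_*b^{2_*}+_*c^{2_*}$ because $\u$ is fixed and hence has constant multiplicative norm.

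The next step is to differentiate $\u$ multiplicatively and apply the multiplicative Frenet formulas from Section~3. Imposing $\u^{*}=0_*$ and reading off the $\t$, $\n$, $\b$ components separately yields the system
\begin{equation*}
a^{*}=b\cdot_*\kappa,\qquad a\cdot_*\kappa +_* b^{*} -_* c\cdot_*\tau = 0_*,\qquad c^{*}=-_*b\cdot_*\tau.
\end{equation*}
Since $b$ is multiplicatively constant, $b^{*}=0_*$ and the middle equation reduces to $a\cdot_*\kappa = c\cdot_*\tau$, i.e.\ $a/_*c = \tau/_*\kappa = f$. Coupling this ratio with the constancy of $a^{2_*}+_*c^{2_*}$ gives $c^{2_*}$ as a constant multiplicative multiple of $\kappa^{2_*}/_*(\kappa^{2_*}+_*\tau^{2_*})$.

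To finish the forward direction I would compute $(\tau/_*\kappa)^{*}=f^{*}$ via the multiplicative quotient rule applied to $a/_*c$, substitute $a^{*}=b\cdot_*\kappa$ and $c^{*}=-_*b\cdot_*\tau$, and eliminate $a,c$ using the relations just derived. The resulting expression collapses exactly into $[\kappa^{2*}/_*(\kappa^{2*}+_*\tau^{2*})^{\frac{3}{2}*}]\cdot_*(\tau/_*\kappa)^{*}$, and is seen to equal a multiplicative constant (morally $\cot_*\theta$ up to a sign). For the converse I would run this chain backward: given constant $\sigma$, choose $\theta$ with $\cot_*\theta=\sigma$, define $b=\cos_*\theta$, then $c$ by the formula for $c^{2_*}$ above and $a=c\cdot_*f$; setting $\u = a\cdot_*\t +_* b\cdot_*\n +_* c\cdot_*\b$ and checking the three Frenet components of $\u^{*}$ produces $\u^{*}=0_*$, so $\u$ is constant and $\langle \n,\u\rangle_*=b$ is constant, certifying that $\x$ is a multiplicative slant helix.

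The main obstacle I anticipate is bookkeeping rather than any conceptual hurdle. Every instance of the product, quotient and power rules must be the multiplicative one, and the fractional multiplicative power $(\kappa^{2*}+_*\tau^{2*})^{\frac{3}{2}*}$ corresponds under $a\mapsto \log a$ to a classical $3/2$ exponent that has to match precisely the denominator arising from $c$ in the elimination step. Making sure the final simplification leaves no stray multiplicative factor, and that signs under $-_*$ behave correctly in the $\n$-component of $\u^{*}$, is where a slip is most likely.
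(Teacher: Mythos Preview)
Your plan is correct and is essentially the same argument the paper gives: expand the fixed direction in the multiplicative Frenet frame, use constancy to derive $a/_*c=\tau/_*\kappa$ and the expression for $c$ in terms of $\kappa,\tau$, and then extract $\sigma$ as a multiplicative constant. The only organizational differences are that the paper differentiates the inner products $\langle\n,\v\rangle_*$ (going up to $\langle\n^{**},\v\rangle_*=0_*$) rather than setting $\u^{*}=0_*$ and reading off components, and that the paper proves only the forward implication whereas your outline also sketches the converse.
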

\begin{proof}
 Suppose that multiplicative naturally parametrized curve $s\mapsto \x(s)$ is a {multiplicative slant helix}. Since the multiplicative normal vector field of the multiplicative curve $\x$ makes a constant multiplicative angle with $\v$, which is a constant multiplicative vector, we have
 \begin{equation}
 \langle \n, \v\rangle_*=\cos_*\theta, \label{hel.1}   
\end{equation}
 where $\theta$ constant multiplicative angle. Taking a multiplicative derivative of Eq. \eqref{hel.1}, we get
\begin{equation}
\langle \n^*, \v\rangle_*=0_*, \label{hel.9}   
\end{equation}
and
\begin{equation*}
-_*\kappa\cdot_*\langle \t, \v\rangle_*+_*\tau\cdot_*\langle \b, \v\rangle_*=0_*.  
\end{equation*}
As can be seen from the elements of multiplicative Frenet frame and Eq. \eqref{hel.1}, there is a constant angle between $\n$ and fixed direction $\v$ and there is also a constant angle between $\b$ and fixed direction $\v$. Then the following equations are provided,
\begin{eqnarray}
   \langle \t, \v\rangle_*&=&(c\cdot_*\tau)/_*\kappa, \label{hel.5}\\
   \langle \b, \v\rangle_*&=&c, \textbf{ }c\in\r_*. \label{hel.6} 
\end{eqnarray}
In terms of the multiplicative Frenet frame, we can write the decomposition for $\v$ as
\begin{eqnarray*}
\v&=&e^{\langle\log\t,\log\v\rangle\log\t+\langle\log\n,\log\v\rangle\log\n+\langle\log\b,\log\v\rangle\log\b} \\
&=& \langle \t,\v  \rangle _* \cdot_* \t +_*  \langle \n,\v  \rangle _* \cdot_* \n +_*  \langle  \b,\v  \rangle _* \cdot_* \b.
\end{eqnarray*}
The constant direction $\v$ from Eqs. \eqref{hel.1}, \eqref{hel.5} and \eqref{hel.6} is obtained as follows
\begin{equation}
\v = (c\cdot_*\tau)/_*\kappa \cdot_* \t +_* \cos_*\theta \cdot_* \n +_* c\cdot_* \b.
\end{equation}
Since $\v$ is the multiplicative unit vector, taking the multiplicative norm of both sides of the above equation, we get
\begin{eqnarray*}
e^{\langle \log\v,\log\v\rangle^{\frac{1}{2}}}&=&((c\cdot_*\tau)/_*\kappa)^{2*}\cdot_*e^{\langle \log\t,\log\t\rangle^{\frac{1}{2}}}
+_*\cos_*^{2*}\theta\cdot_* e^{\langle \log\n,\log\n\rangle^{\frac{1}{2}}}\\&+_*&c^{2*}\cdot_*e^{\langle \log\b,\log\b\rangle^{\frac{1}{2}}}
\end{eqnarray*}
or
\begin{equation*}
c^{2*}\cdot_*(\tau^{2*}/_*\kappa^{2*}+_*e)=\sin_*^{2*}\theta.
\end{equation*}
If the necessary algebraic operations are performed here, we obtain
\begin{equation*}
 c=(\kappa/_*(\kappa^{2*}+_*\tau^{2*})^{\frac{1}{2}*}\cdot_*\sin_*\theta.
\end{equation*}
Therefore, we can easily write $\v$ as
\begin{equation}
\v = \tau/_*(\kappa^{2*}+_*\tau^{2*})^{\frac{1}{2}*}\cdot_*sin_*\theta \cdot_* \t +_* \cos_*\theta \cdot_* \n +_* \kappa/_*(\kappa^{2*}+_*\tau^{2*})^{\frac{1}{2}*}\cdot_*sin_*\theta\cdot_* \b. \label{hel.14}
\end{equation}
Take the multiplicative derivative of Eq. \eqref{hel.9}, we get
\begin{equation}
\langle \n^{**}, \v\rangle_*=0_*. \label{hel.15}   
\end{equation}
From multiplicative Frenet frame and Eqs. \eqref{hel.14} and \eqref{hel.15}, we have 
\begin{eqnarray*}
 &&\langle -_*\kappa^*\cdot_*\t-_*(\kappa^{2*}+_*\tau^{2*})\cdot_*\n+_*\tau^*\b,\tau/_*(\kappa^{2*}+_*\tau^{2*})^{\frac{1}{2}*}\cdot_*sin_*\theta \cdot_* \t\\
  &&+_* \cos_*\theta \cdot_* \n +_* \kappa/_*(\kappa^{2*}+_*\tau^{2*})^{\frac{1}{2}*}\cdot_*sin_*\theta\cdot_* \b\rangle_*=0_*.
\end{eqnarray*}
Here the following equation exists
\begin{equation*}
(\kappa\cdot_*\tau^*-\tau\cdot_*\kappa^*)/_*(\kappa^{2*}+_*\tau^{2*})^{\frac{3}{2}*}\cdot_*\tan_*\theta+e=0_*,   
\end{equation*}
and finally, we get
\begin{equation*}
    \tan_*\theta=(\kappa\cdot_*\tau^*-\tau\cdot_*\kappa^*)/_*(\kappa^{2*}+_*\tau^{2*})^{\frac{3}{2}*}.
\end{equation*}
Since the multiplicative angle $\theta$ is constant, after the necessary adjustments, we obtain that
\begin{equation*}
  \kappa^{2*}/_*(\kappa^{2*}+_*\tau^{2*})^{\frac{3}{2}*}\cdot_*(\tau/_*\kappa)^*=c,\quad c\in\r_*. 
\end{equation*}
\end{proof}
\begin{example}
Let $\x$ $:I\subset\mathbb{R_*}\rightarrow \E_*^{3}$ be multiplicative naturally parametrized slant helix curve in $\mathbb{R}_*^{3}$ as%
\begin{equation*}
\x(s)=\left(x_1(s),x_2(s),x_3(s)\right)
\end{equation*}
where
\begin{eqnarray*}
    x_1(s)&=&e^9/_*e^{400}\cdot_*e^{\sin\log 25s}+_*e^{25}/_*e^{144}\cdot_*e^{\sin\log 9s}\\
    x_2(s)&=&-_*e^9/_*e^{400}\cdot_*e^{\cos\log 25s}+_*e^{25}/_*e^{144}\cdot_*e^{\cos\log 9s}\\
    x_3(s)&=&e^{15}/_*e^{136}\cdot_*e^{\sin\log 17s}
\end{eqnarray*}
In Fig. \ref{fig3}, we present the graph of the multiplicative slant helix.
\end{example}
\begin{figure}[hbtp]
\begin{center}
\includegraphics[width=.88\textwidth]{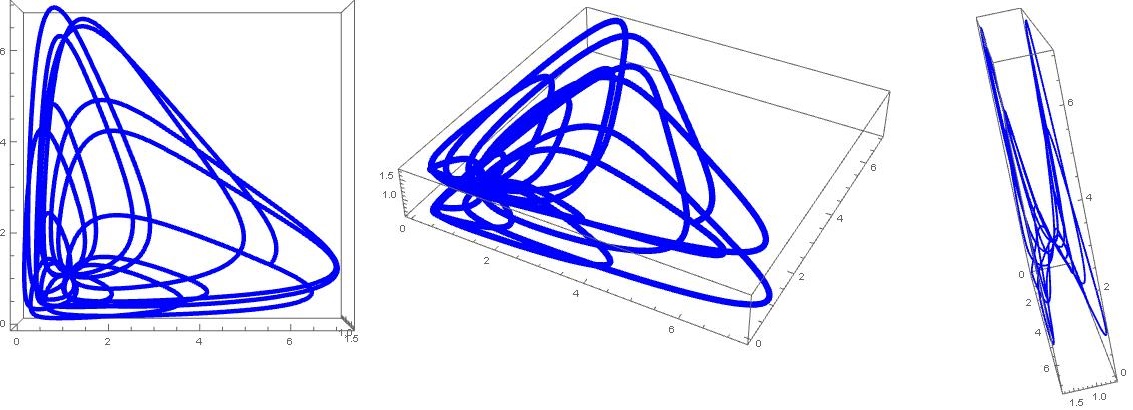}\\
\end{center}
\caption{\text{Multiplicative slant helix.} \label{fig3}}
\end{figure}
\begin{definition}
Let a regular multiplicative curve $\x$ be given in the multiplicative space with $\kappa\neq 0_*$. The multiplicative curve $\x$ is called the multiplicative clad helix if the multiplicative spherical image of the multiplicative principal normal vector $\n: I\rightarrow \S^{2}_*$ ($\S^{2}_*$ denotes the multiplicative sphere) of the curve $\x$ is part of the multiplicative cylindrical helix in $\S^{2}_*$. 

Therefore we remark that a multiplicative slant helix is a multiplicative clad helix. We have the following characterization of clad helices. \label{def1}
\end{definition}
\begin{theorem}
 Let $\x$ be a multiplicative naturally parametrized curve with $\kappa\neq 0_*$. Then $\x$ is a multiplicative clad helix if and only if
 \begin{eqnarray*}
 \Gamma=\sigma^*/_*[\kappa\cdot_*(e+_*f^{2*})\cdot_*(e+_*\sigma^{2*})^{\frac{3}{2}*}]
 \end{eqnarray*}
 is a constant function. \label{teo1}
 Here, $f=\tau/_*\kappa$
 and $\sigma=f^*/_*(\kappa\cdot_*(e+_*f^{2*})^{\frac{3}{2}*}).$
\end{theorem}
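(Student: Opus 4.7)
The plan is to reduce the characterization to the general-helix theorem already established in this section, applied not to $\x$ itself but to its multiplicative normal indicatrix, and then to recast the resulting condition in terms of the invariants of $\x$ via the formulas already derived for the normal indicatrix.

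By the definition of a multiplicative clad helix given above, $\x$ is a clad helix precisely when the normal indicatrix $\x_n=\n$ is a multiplicative cylindrical, hence general, helix on the multiplicative unit sphere $\S^2_*$. Applying the multiplicative general-helix characterization stated earlier in this section to the spherical curve $\x_n$ therefore reduces the problem to showing that
\[\tau_n/_*\kappa_n \text{ is multiplicatively constant} \iff \Gamma \text{ is multiplicatively constant}.\]
This reduction isolates the essential algebraic identity that has to be established.

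Next, I would invoke the earlier proposition giving closed-form expressions for $\kappa_n$ and $\tau_n$ in terms of $\kappa,\tau,f,\sigma$. In the unified notation where $e+_*a^{2*}$ stands for $e^{1+(\log a)^2}$, so that $(e+_*a^{2*})^{\frac{1}{2}*}=e^{(1+(\log a)^2)^{1/2}}$, these curvatures take the form $\kappa_n=(e+_*\sigma^{2*})^{\frac{1}{2}*}$ and $\tau_n=\Gamma\cdot_*(e+_*\sigma^{2*})^{\frac{1}{2}*}$, in complete parallel with the pattern $\kappa_t=(e+_*f^{2*})^{\frac{1}{2}*}$, $\tau_t=\sigma\cdot_*(e+_*f^{2*})^{\frac{1}{2}*}$ found for the tangent indicatrix. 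Direct multiplicative cancellation then yields $\tau_n/_*\kappa_n=\Gamma$, and the equivalence is established. The quoted explicit form $\Gamma=\sigma^*/_*[\kappa\cdot_*(e+_*f^{2*})\cdot_*(e+_*\sigma^{2*})^{\frac{3}{2}*}]$ arises naturally from this computation: the factor $\kappa\cdot_*(e+_*f^{2*})^{\frac{1}{2}*}$ hidden in the denominator is precisely the Jacobian $d_*s_n/_*d_*s$ supplied by the normal-indicatrix arc-length proposition, which converts $\sigma^*$, a $*$-derivative with respect to $s$, into a $*$-derivative along $\x_n$, while the remaining factor $(e+_*f^{2*})^{\frac{1}{2}*}\cdot_*(e+_*\sigma^{2*})^{\frac{3}{2}*}$ carries $\kappa_n^{3*}$ together with the reparameterisation term that appears naturally in computing $\tau_n$.

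The main obstacle will be the consistent bookkeeping of multiplicative exponents and chain rules: one must carefully distinguish $*$-derivatives taken with respect to $s$ from those taken with respect to $s_n$, expand the compound multiplicative powers $(e+_*a^{2*})^{k/2*}$ so that cancellations become visible, and verify the correct reappearance of the factor $\kappa$ upon each change of parameter. Once this bookkeeping is executed, the theorem follows by combining the clad-helix definition, the general-helix characterization and the normal-indicatrix formulas.
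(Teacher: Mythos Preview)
Your proposal is correct and follows essentially the same route as the paper: invoke the definition of a multiplicative clad helix so that the condition becomes ``$\x_n$ is a general helix'', apply the general-helix criterion $\tau_n/_*\kappa_n=\text{const}$, and then read off $\tau_n/_*\kappa_n=\Gamma$ from the already-established formulas $\kappa_n=(e+_*\sigma^{2*})^{\frac{1}{2}*}$ and $\tau_n=\Gamma\cdot_*(e+_*\sigma^{2*})^{\frac{1}{2}*}$. The paper's proof is exactly this, only terser; your extra paragraph tracing the explicit denominator of $\Gamma$ back to the arc-length Jacobian $d_*s_n/_*d_*s$ is correct supplementary commentary but not needed for the argument.
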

\begin{proof}
With the multiplicative normal indicatrix of the multiplicative curve $\x$ being $\x_n$, we know from Eq. \eqref{n1} that the multiplicative curvatures of $\x_n$ are as follows 
$$ \kappa_n=(e+_*\sigma^{2*})^{\frac{1}{2}*},$$
$$\tau_n=\Gamma\cdot_*(e+_*\sigma^{2*})^{\frac{1}{2}*}.$$
It follows that $\Gamma=\tau_n/_*\kappa_n$. For a part of $\x_n$ to be a multiplicative cylindrical helix, $\tau_n/_*\kappa_n$ must be a multiplicative constant. This means that $\Gamma$ is a multiplicative constant.
\end{proof}
\begin{definition}
Let a regular multiplicative curve $\x$ be given in the multiplicative space with $\kappa\neq 0_*$. The multiplicative curve $\x$ is called the multiplicative g-clad helix if the multiplicative spherical image of the multiplicative principal normal vector $\n: I\rightarrow \S^{2}_*$ of the curve $\x$ is part of the multiplicative slant helix in $\S^{2}_*$. 

We have the following characterization of g-clad helices. \label{def2}
\end{definition}
\begin{theorem}
 Let $\x$ be a multiplicative naturally parametrized curve with $\kappa\neq 0_*$. Then $\x$ is a multiplicative g-clad helix if and only if 
 \begin{equation*}
    \psi(s)= \Gamma^{*}(s)/_*\left((\kappa^{2*}(s)+_*\tau^{2*}(s))^{\frac{1}{2}*}\cdot_*(e+_*\sigma^{2*}(s))^{\frac{1}{2}*}\cdot_*(e+_*\Gamma^{2*}(s))^{\frac{3}{2}*}\right)
 \end{equation*}
 is a constant function. \label{teo2}
\end{theorem}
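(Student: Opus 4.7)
The plan is to reduce this theorem directly to the slant-helix characterization (Theorem 4.4) applied to the multiplicative normal indicatrix $\x_n$. By Definition 4.9, the curve $\x$ is a multiplicative g-clad helix exactly when its normal indicatrix $\x_n$ is itself a multiplicative slant helix in $\S_*^2$. So the whole proof reduces to computing the ``slant-helix quantity'' of $\x_n$ in terms of the Frenet data of the original curve $\x$ and verifying that it coincides with $\psi(s)$.

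First I would recall the Frenet data of the normal indicatrix. From Proposition 4.6 (equivalently, the expressions used inside the proof of Theorem 4.8), the multiplicative curvature and torsion of $\x_n$ are $\kappa_n = (e+_*\sigma^{2*})^{\frac{1}{2}*}$ and $\tau_n = \Gamma\cdot_*(e+_*\sigma^{2*})^{\frac{1}{2}*}$, so in particular $\tau_n/_*\kappa_n = \Gamma$. Applying Theorem 4.4 to $\x_n$, the indicatrix $\x_n$ is a multiplicative slant helix if and only if the quantity $\sigma_n := [\kappa_n^{2*}/_*(\kappa_n^{2*}+_*\tau_n^{2*})^{\frac{3}{2}*}]\cdot_*(\tau_n/_*\kappa_n)^{*}$ is a multiplicative constant; here the derivative $^{*}$ inside the bracket is taken with respect to the arc-length parameter $s_n$ of $\x_n$, not $s$.

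Next I would simplify. Using $\kappa_n^{2*} = e+_*\sigma^{2*}$ together with the factorization $\kappa_n^{2*}+_*\tau_n^{2*} = (e+_*\sigma^{2*})\cdot_*(e+_*\Gamma^{2*})$, the scalar prefactor collapses to $1/_*[(e+_*\sigma^{2*})^{\frac{1}{2}*}\cdot_*(e+_*\Gamma^{2*})^{\frac{3}{2}*}]$. For the remaining factor $(\tau_n/_*\kappa_n)^{*} = d_*\Gamma/_* d_* s_n$, I would convert to a derivative in $s$ via the chain rule and Proposition 4.5: since $d_* s_n/_* d_* s = \kappa\cdot_*(e+_*f^{2*})^{\frac{1}{2}*}$, which equals $(\kappa^{2*}+_*\tau^{2*})^{\frac{1}{2}*}$ using $f = \tau/_*\kappa$, one gets $d_*\Gamma/_* d_* s_n = \Gamma^{*}/_*(\kappa^{2*}+_*\tau^{2*})^{\frac{1}{2}*}$. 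Assembling the pieces yields $\sigma_n = \Gamma^{*}/_*[(\kappa^{2*}+_*\tau^{2*})^{\frac{1}{2}*}\cdot_*(e+_*\sigma^{2*})^{\frac{1}{2}*}\cdot_*(e+_*\Gamma^{2*})^{\frac{3}{2}*}] = \psi(s)$, which finishes the equivalence.

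The main obstacle I anticipate is bookkeeping in the multiplicative arithmetic: verifying the factorization of $\kappa_n^{2*}+_*\tau_n^{2*}$, confirming the identity $\kappa\cdot_*(e+_*f^{2*})^{\frac{1}{2}*} = (\kappa^{2*}+_*\tau^{2*})^{\frac{1}{2}*}$ (which tacitly uses $\kappa > 0_*$ so that $\log\kappa > 0$), and carefully tracking whether each $^{*}$-derivative is with respect to $s$ or $s_n$ when invoking Theorem 4.4. These manipulations are routine in classical calculus but must be executed with care in the $+_*,\cdot_*,/_*$ setting to avoid misplaced exponents.
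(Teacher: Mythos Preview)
Your proposal is correct and follows essentially the same route as the paper: apply the slant-helix criterion to the normal indicatrix $\x_n$, use $\kappa_n=(e+_*\sigma^{2*})^{\frac{1}{2}*}$, $\tau_n=\Gamma\cdot_*(e+_*\sigma^{2*})^{\frac{1}{2}*}$ together with the factorization $\kappa_n^{2*}+_*\tau_n^{2*}=(e+_*\sigma^{2*})\cdot_*(e+_*\Gamma^{2*})$, and simplify to $\psi(s)$. You are in fact more explicit than the paper about the chain-rule conversion from $s_n$ to $s$ (and about the sign hypothesis $\kappa>0_*$ needed for $\kappa\cdot_*(e+_*f^{2*})^{\frac{1}{2}*}=(\kappa^{2*}+_*\tau^{2*})^{\frac{1}{2}*}$), which the paper absorbs into the phrase ``we can easily see that''.
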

\begin{proof}
With the multiplicative normal indicatrix of the multiplicative curve $\x$ being $\x_n$, from Eq. \eqref{n1} the multiplicative curvatures of $\x_n$ are as follows 
\begin{eqnarray*}
 \kappa_n&=&(e+_*\sigma^{2*})^{\frac{1}{2}*},\\
 \tau_n&=&\Gamma\cdot_*(e+_*\sigma^{2*})^{\frac{1}{2}*}.
\end{eqnarray*}
If the necessary algebraic operations are performed here, we get 
$$\kappa_n^{2*}+_*\tau_n^{2*}=(e+_*\sigma^{2*})\cdot_*(e+_*\Gamma^{2*}).$$
From Eq. \eqref{n3}, we know that
\begin{equation*}
\left(\kappa_n^{2*}/_*(\kappa_n^{2*}+_*\tau_n^{2*})^{\frac{3}{2}*}\right)\cdot_*(\tau_n/_*\kappa_n)^*=c, c\in\r_*.
\end{equation*}
So, we can easily see that
\begin{equation}
\Gamma^{*}/_*\left((\kappa^{2*}+_*\tau^{2*})^{\frac{1}{2}*}\cdot_*(e+_*\sigma^{2*})^{\frac{1}{2}*}\cdot_*(e+_*\Gamma^{2*})^{\frac{3}{2}*}\right). \label{n4}
\end{equation}
If the normal indicatrix of the multiplicative curve $\x$ is a slant helix, Eq. \eqref{n4} is a constant function. This completes the proof.
\end{proof}
\begin{conclusion}
Considering Theorems \eqref{teo1} and \eqref{teo2}  that a multiplicative slant helix is a multiplicative helix with the condition $\sigma\equiv 0_*$, a multiplicative clad helix is a multiplicative slant helix with the condition $\Gamma\equiv 0_*$ and a multiplicative g-clad helix is a multiplicative clad helix with the condition $\psi\equiv 0_*$. Hence, we have the following relation 
\begin{displaymath} \left\{\begin{array}{ll} &\textrm{the family of}\\ &\textrm{multiplicative}\\&\textrm{helices} \end{array}\right\}\subset\left\{\begin{array}{ll} &\textrm{the family of}\\ &\textrm{multiplicative}\\&\textrm{slant helices} \end{array}\right\}\subset\left\{\begin{array}{ll} &\textrm{the family of}\\&\textrm{multiplicative}\\ &\textrm{clad helices}\\ \end{array}\right\}\subset\left\{\begin{array}{ll} &\textrm{the family of}\\&\textrm{multiplicative}\\ &\textrm{g-clad helices}\\ \end{array}\right\}.
\end{displaymath}
\end{conclusion}

\begin{example}
Let $\x$ $:I\subset\mathbb{R_*}\rightarrow \E_*^{3}$ be multiplicative naturally parametrized clad helix curve in $\mathbb{R}_*^{3}$ parameterized by%
\begin{equation*}
\x(s)=\left(x_1(s),x_2(s),x_3(s)\right)
\end{equation*}
where
\begin{eqnarray*}
    x_1(s)&=&e^{18}\cdot_*\cos_*3s\cdot_*\cos_*(e^6\cdot_*\cos_* 3s),\\
    x_2(s)&=&e^{-18}\cdot_*\cos_*3s\cdot_*\sin_*(e^6\cdot_*\cos_* 3s),\\
    x_3(s)&=&\sin_*2s.
\end{eqnarray*}
In Fig. \ref{fig8}, we present the graph of the multiplicative clad helix.
\begin{figure}[hbtp]
\begin{center}
\includegraphics[width=.82\textwidth]{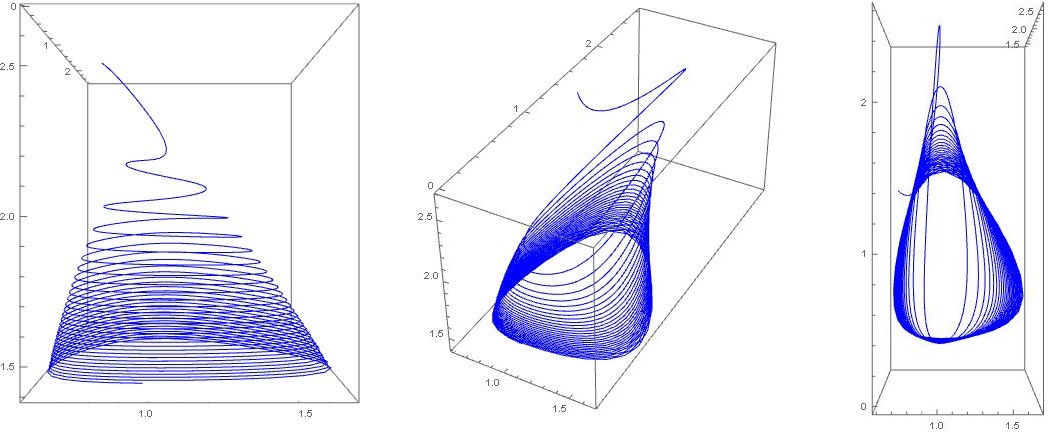}\\
\end{center}
\caption{\text{Multiplicative clad helix.} \label{fig8}}
\end{figure}
\end{example}

\end{document}